\numberwithin{equation}{section}
\newtheorem{theo}{Theorem}[section]
\newtheorem{lemma}[theo]{Lemma}
\newtheorem{prop}[theo]{Proposition}
\newtheorem{cor}[theo]{Corollary}
\newtheorem{defi}[theo]{Definition}
\theoremstyle{definition}
\newtheorem{rem}[theo]{Remark}
\newtheorem{ass}[theo]{Assumptions}
\def\t{\tau}\def\a{\alpha}
\def\clip{C_{Lip}^1}
\def\vertv{\vert_{_{\V}}}
\def\t{\tau}
\def\a{\alpha}
\def\A{A_0}
\def\V{V^\prime}
\def\ranglev{\rangle_{V,V^\prime}}
\def\p{\pi}
\def\lipd{ ]\kern-1pt_{_{L}}}
\def\lips{[}
\def\e{\varepsilon}
\def\B{\mathcal{B}}
\begin{document}
\title{Equilibrium points for Optimal Investment with Vintage Capital }
\author{Silvia Faggian${}^1$}\footnote{LUM ``Jean Monnet", Casamassima (Bari), I-70010.}
\begin{abstract}
The paper concerns the study of equilibrium points, namely the
stationary solutions to the closed loop equation, of an infinite
dimensional and infinite horizon boundary control problem for
linear partial differential equations. Sufficient conditions for
existence of equilibrium points in the general case are given and
later applied to the economic problem of optimal investment with
vintage capital. Explicit computation of equilibria for the
economic problem in some relevant examples is also provided.
Indeed the challenging issue here is showing that a theoretical
machinery, such as optimal control in infinite dimension, may be
effectively used to {compute} solutions explicitly and easily, and
that the same computation may be straightforwardly repeated in
examples yielding the same abstract structure. No stability result
is instead provided: the work here contained has to be considered
as a first step in the direction of studying the behavior of
optimal controls and trajectories in the long run.\end{abstract}
 \maketitle

\begin{small}
{\bf Subj-class:} Optimization and Control \\{\bf MSC-class:}
49J15, 49J20, 35B37.
\\{\bf Keywords:} Linear
convex control, Boundary control,
Hamilton--Jacobi--Bellman equations, Optimal investment problems, Vintage capital.\\
\end{small}
\bigskip

\section{Introduction}
The paper concerns the study of equilibrium points of an infinite
dimensional and infinite horizon boundary control problem for
linear partial differential equations. More precisely, we take
into account a state equation of type\begin{equation}\label{eq
stato in H}\begin{cases}y^\prime(\t)=\A y(\t)+Bu(\t), & \t\in
[t,+\infty)\\
y(t)=x\in H,\end{cases}\end{equation} where $H$ is the state
space, $y:[t,+\infty)\to H$ is the trajectory, $U$ is the control
space and $u:[t,+\infty)\to U$ is the control, $A_0:D(A_0)\subset
H\to H$ is the infinitesimal generator of a strongly continuous
semigroup of linear operators $\{e^{\t \A}\}_{\t\ge0}$ on $H$,
 and the control operator $B$ is linear and {\it unbounded}, say
 $B:U\to[D(A_0^*)]^\prime$.
Besides,  we consider a cost functional given by
\begin{equation}\label{J in H}J_\infty(t,x,u)
=\int_t^{+\infty}e^{-\lambda
\tau}\left[g_0\left(y(\t)\right)+h_0\left(u(\t)\right)\right]d\t\end{equation}
 where the functions  $g_0$ and $h_0$ are convex functions as better
specified later.

\medskip
More precisely, by \emph{equilibrium points} we mean stationary
solutions to the closed loop equation associated by Dynamic
Programming to (\ref{eq stato in H}) that is
\begin{equation}\label{CLEi} y(\t)=e^{(\t-t)A}x+\int_t^\t
e^{(\t-\sigma)A}B(h_0^*)'(-B^*\Psi^\prime(y(s))) d\sigma,\ \
\tau\in[t,+\infty[ ,\
\end{equation}
where $h_0^*$ is the convex conjugate of $h_0$,  $\Psi$ is the
value function of the optimal control problem for initial time
$t=0$, more precisely
$$\Psi(x)=Z_\infty(0,x)=\inf_{u\in L^p_\lambda(0,+\infty;U)}J_\infty(0,x,u),$$
and
$$G(x)=(h_0^*)'(-B^*\Psi^\prime(x)),$$
is the unique optimal feedback map, as shown in \cite{FaGo2,Fa6}.
Indeed the problem of minimizing $J_\infty(t,x,u)$ with respect to
$u$ over the Banach space
$$L_\lambda^p(t,+\infty;U)=\{u:[t,+\infty)\to U\ :\ \tau\mapsto u(\tau)e^{-\frac{\lambda}{p}\tau}\in
L^p(t,+\infty;U)\},\  p\ge2,$$ was studied paper by Faggian and
Gozzi in \cite{FaGo2}, and by Faggian in \cite{Fa6} by means of
Dynamic Programming methods, deriving:\begin{itemize}

\item existence and uniqueness for the associated
Hamilton-Jacobi-Bellman (briefly, HJB) equation

\item a feedback formula for optimal controls in terms of the
spatial gradient of the value function,

\item Pontryagin Maximum Principle.\end{itemize} All of these
results are recalled in Section \ref{prel}.

As a first result here, we give sufficient conditions for
existence of equilibrium points in the general case, we apply such
results to the problem optimal investment with vintage capital
described in Section 3 (cfr. Section 4). Nevertheless the most
interesting result of the paper is the explicit computation of
equilibria for the economic problem in some relevant examples
(Section 5). Indeed the challenging issue here is showing that a
theoretical machinery such as optimal control in infinite
dimension may be effectively used to \emph{compute} solutions
explicitly and easily, and that the same computation may be
straightforwardly repeated in examples yielding the same abstract
structure.

No stability result is instead provided. Under this respect, the
work here contained has to be considered as a first step in the
direction of studying the behavior of optimal controls and
trajectories in the long run.

\subsection{Bibliographical notes}
It is well known that control problems with unbounded control
operator $B$ arise when we rephrase into abstract terms some
boundary control problem for PDEs (or, more generally, problems
with control on a subdomain). Indeed, we motivate our framework
with the application to the economic problem of optimal investment
with vintage capital  in the framework by Barucci and Gozzi
\cite{BG1} \cite{BG2} that we describe in detail in Section
\ref{esemp}. Similar problems  with unbounded control operator
have been discussed in a series of papers by this author and
others. The unconstrained case has been studied both in the case
of finite and infinite horizon \cite{Fa2,Fa3,FaGo2} while
\cite{FaGo} contains the finite horizon case with constrained
controls. The (finite horizon) case with both boundary control and
state constraints is treated in \cite{Fa4}. The case of infinite
horizon (without constraints) has been treated in \cite{FaGo2} and
\cite{Fa6}.

Some further references on \emph{boundary control} in infinite
dimension follow. We recall that such problems have been studied
in the framework of classical/strong solutions and in that of
viscosity solutions. Regarding Dynamic Programming in the
classical/strong framework, the available results mainly regard
the case of linear systems and quadratic costs (where HJB reduces
to the operator Riccati equation). The reader is then referred
{\it e.g.} to the book by Lasiecka and Triggiani \cite{LT}, to the
book by Bensoussan, Da Prato, Delfour and Mitter \cite{BDDM}, and,
for the case of nonautonomous systems, to the papers by
Acquistapace, Flandoli and Terreni \cite{AFT, AT1, AT2, AT3}. For
the case of a linear system and a general convex cost, we mention
the papers by this author \cite{Fa1,Fa0,Fa2,Fa3}. On Pontryagin
maximum principle for boundary control problems we mention again
the book by Barbu and Precupanu (Chapter 4 in \cite{BP}).

For viscosity solutions and HJB equations in infinite dimension we
mention the series of papers by Crandall and Lions \cite{CL} where
also some boundary control problem arises. Moreover, for boundary
control we mention Gozzi, Cannarsa and Soner \cite{CGS} and the
paper by Cannarsa and Tessitore \cite{CT} on existence and
uniqueness of viscosity solutions of HJB.  We note also that a
verification theorem in the case of viscosity solutions has been
proved in some finite dimensional case in the book by Yong and
Zhou \cite{YZ}. We finally mention the paper by Fabbri
\cite{Fabbri.viscoso} where the author derives an existence and
uniqueness result for the viscosity solution of HJB associated to
optimal investment with vintage capital (with infinite horizon and
without constraints), that is the application of Section 3 of the
present paper, obtaining the results by making use of the specific
properties of the state equation, while no result is there
provided for the general problem.

We mention also some fundamental papers and books on the case of
\emph{distributed control} in the classical/strong framework such
as the works by Barbu and Da Prato \cite{BD1, BD2,BD3} for some
linear convex problems,
 to Di Blasio \cite{D1,D2} for the case
of constrained control, to Cannarsa and Di Blasio \cite{CD} for
the case of state constraints, to Barbu, Da Prato and Popa
\cite{BDP} and to Gozzi \cite{G1,G2,G3} for semilinear systems.

Regarding applications, on control on a subdomain (boundary or
point control) we refer the reader to the many examples contained
in the books by Lasiecka and Triggiani \cite{LT},
 and by Bensoussan {\it et al} \cite{BDDM}. Moreover,
for economic models with vintage capital the reader may see
 the papers  by Barucci and Gozzi \cite{BG1}, \cite{BG2},  the papers by
 Feichtinger, Hartl, Kort, Veliov et al.
\cite{F1,F2,F3,FHS}, and for population dynamic
 the book by Iannelli \cite{I},   the paper by Ani\c ta,
Iannelli, Kim and Park \cite{AIKP}, and the papers by Almeder,
Caulkins, Feichtinger, Tragler, and Veliov \cite{Almeder} and
references therein.

\section{Preliminaries}\label{prel}
We here recall all the relevant results that are needed in the
sequel. The reader may find the proofs of all statements in
\cite{FaGo2,Fa6}. According to the notation there contained, if
$X$ and $Y$ are Banach spaces, we denote by $\vert\cdot\vert_X$
the norm on $X$, by $\vert\cdot\vert$ the euclidean norm in
$\mathbb{R}$, and we set
\begin{equation*}\begin{split}
&Lip(X;Y)=\{f:X\to Y ~:~\lips f\lipd:=\sup_{x,y\in X,~x\neq y}
\frac{\vert f(x)-f(y)\vert_{Y}}{\vert x-y\vert_X} <+\infty\}\\
&\clip(X):=\{f\in C^1(X)~:~ \lips f^\prime\lipd<+\infty\}\\
&\B_r(X,Y):=\{f:X\to Y~:~\vert f\vert_{\B_r}:=\sup_{x\in X} {\vert
f(x)\vert_Y\over 1+\vert x\vert_X^r}<+\infty\},\ \ \
\B_r(X):=\B_r(X,\mathbb{R}).\\
\end{split}\end{equation*}
Moreover we set
\begin{equation*}
\Sigma_0(X):=\{w\in \B_2(X)\ :\ w\ {\rm is\ convex,\ }
w\in\clip(X) \}\end{equation*} and, for $T>0$
\begin{equation*}\begin{split}\mathcal{Y}([0,T]\times X)=
\{w:[0,T]&\times X\to \mathbb{R}\ :\ w\in C([0,T],\B_2(X)),\
\\w(t,\cdot)\in&\Sigma_0(X),\ \forall t\in[0,T], \ \ w_x\in C([0,T], \B_1(X))\}\\
\end{split}
\end{equation*}
All the spatial derivatives above have to be intended as Frech\'et
differentials.
\bigskip

\noindent Then we consider two Hilbert spaces $V,\V$, being dual
spaces, which we do not identify for reasons which are recalled in
Remark \ref{noidentif} and we denote the duality pairing by
$\langle\cdot,\cdot\rangle$. We set $\V$ as the state space of the
problem, and denote with $U$ the control space, being $U$ another
Hilbert space. The state space is $\V$ and the control space is
$U$. For any fixed $x$ in $\V$ and $t>0$ and $\tau\ge t$, the
solution to the state equation in  $\V$ is given by variation of
constant formula by
\begin{equation}\label{sevc}y(\t)=e^{(\t-t)A}x+\int_t^\t
e^{(\t-\sigma)A}Bu(\sigma)d\sigma,\ \ \tau\in[t,+\infty[ ,\
\end{equation}
while the target functional is of type
\begin{equation}\label{J-t-T}J_\infty(t,x,u):=\int_t^{+\infty}e^{-\lambda
\tau}[g_0(y(\tau))+h_0(u(\tau))]d\tau.\end{equation} We assume the
following hypotheses hold:

\begin{ass}\label{asst2}
\begin{enumerate}

\item[1.] $A:D(A)\subset\V\to\V$ is the infinitesimal generator of
a strongly continuous semigroup $\{e^{\t A}\}_{\t\ge0}$ on $\V$;

\item[2.] $B\in L(U,\V)$;

\item[3.] there exists $\omega\ge0$ such that $\vert e^{\t
A}x\vertv\le  e^{\omega \t}\vert x\vertv,~\forall \t\ge0$;

\item[4.]  $g_0, \phi_0\in\Sigma_0(\V)$

\item[5.] $h_0$ is  convex, lower semi--continuous, $\partial_u
h_0$  is  injective.

\item[6.] $h_0^*(0)=0$, $h_0^*\in \Sigma_0(V)$; \item[7.] $\exists
a>0$,  $\exists b\in \mathbb{R}$, $\exists p\ge2$ : $h_0(u)\ge
a\vert u\vert_U^p+b$, $\forall u\in U$;

Moreover, either \item[8.a] $p> 2$, $\lambda>2\omega$.

 or
 \item[8.b] $\lambda>\omega$,  and $g_0,\phi_0 \in \B_1(\V).$
\end{enumerate}\end{ass}
\begin{rem}\label{noidentif} We do not identify $V$ and $\V$ for
 in the applications the problem is naturally set in a Hilbert space $H$, such that
$V\subset H\equiv H^\prime\subset\V$ (with all bounded
inclusions). Indeed, in order to avoid the discontinuities due to
the presence of $B$, as they appear in (\ref{eq stato in
H})(\ref{J in H}), we work in the extended state space $\V$
related to $H$ in the following way: $V$ is the Hilbert space
$D(\A^*)$ endowed with the scalar product $(v|w)_V:=
(v|w)_H+(\A^*v|\A^*w)_H$, $\V$ is the dual space of $V$ endowed
with the operator norm. Then assume that $B\in L(U,\V)$, and
extend
 the semigroup $\{e^{tA_0}\}_{t\ge0}$ on $H$ to a semigroup
$\{e^{tA}\}_{t\ge0}$ on the space ${V^\prime}$, having
infinitesimal generator $A$, a proper extension of $A_0$. The
reader is referred to \cite{Fa3} for  a detailed treatment. The
coefficient $\omega$ could be any real number, but
  is assumed positive in order to avoid double
 proofs for positive and negative signs.
 \hfill\qed \end{rem}

\begin{rem} \label{estensioni1} Note that the functions
$g$ and $\varphi$ arising from applications usually appear to be
defined and $C^1$ on $H$, not on the larger space $\V$. Then, we
here need to {\it assume} that they can be extended to {\it
$C^1$-regular functions on $\V$} - which is a non trivial issue.
We refer the reader to \cite{Fa2}, \cite{Fa3} and \cite{FaGo2} for
a thorough discussion on this issue.\hfill\qed\end{rem} The
functional $J_\infty(t;x,u)$ has to be minimized with respect to
$u$ over the set of admissible controls
\begin{equation}L^p_\lambda(t,+\infty;U)=\{u:[t,+\infty)\to U
\ ;\ \t\mapsto u(\t)e^{-\frac{\lambda \t}{p}}\in
L^p(t,+\infty;U)\},
\end{equation}
which is Banach space with the norm $$
\|u\|_{L^p_\lambda(t,+\infty;U)}=\int_t^{+\infty} |u(\tau)|_U^p
e^{-\lambda \tau}d\tau = \|e^{-\frac{\lambda (\cdot)
}{p}}u\|_{L^p(t,+\infty;U)}.
$$

 The value function is then defined as
$$Z_\infty(t,x)=\inf_{u\in L^p_\lambda(t,+\infty;U)}J_\infty(t,x,u).$$  As it is easy to
check that
$$Z_\infty(t,x)=e^{-\lambda t}Z_\infty(0,x)$$
one may associate to the problem the following stationary HJB
equation
\begin{equation}\label{SHJB}-\lambda \psi(x)+\langle
\psi^\prime(x), A
x\rangle-h_0^*(-B^*\psi^\prime(x))+g(x)=0,\end{equation} whose
candidate solution is the function $Z_\infty(0,\cdot)$.

\medskip

\noindent We will use the following definition of solution for
equation (\ref{SHJB}).

\begin{defi}\label{defsolSHJB}
A function $\psi$ is a classical solution of the stationary HJB
equation (\ref{SHJB}) if it belongs to $\Sigma_0 (\V)$ and
satisfies (\ref{SHJB}) for every $x\in D(A)$.
\end{defi}


\begin{theo}\label{th:mainnew} Let Assumptions \ref{asst2} hold.
Then there exists a unique classical solution $\Psi$ to
$(\ref{SHJB})$ and it is given by the value function of the
optimal control problem, that is
$$\Psi(x)=Z_\infty(0,x)=\inf_{u\in L^p_\lambda(0,+\infty;U)}J_\infty(0,x,u).$$
\end{theo}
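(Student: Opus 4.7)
The plan is to split the statement into existence and uniqueness, exploiting the finite horizon theory from \cite{FaGo2, Fa6} for the former and a verification (Dynamic Programming) argument for the latter.

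For existence, I would construct $\Psi$ as the pointwise limit, as $T\to\infty$, of the finite horizon value functions $Z_T(0,\cdot)$ attached to problems with terminal payoff $\phi_0\in\Sigma_0(\V)$: by the cited results these lie in $\mathcal{Y}([0,T]\times\V)$ and are the unique classical solutions of the evolutionary HJB. Assumptions 7 and 8 ensure that $Z_\infty(0,x)$ is finite for every $x\in\V$ and that it coincides with the said limit, so I can set $\Psi:=Z_\infty(0,\cdot)$. Convexity is inherited from the joint convexity of $J_\infty(0,\cdot,\cdot)$ on $\V\times L^p_\lambda$ (linearity of the dynamics plus convexity of $g_0,h_0$); the $\B_2(\V)$ bound and the $\clip$ regularity follow from the uniform-in-$T$ estimates on $Z_T$ and its spatial gradient proved in \cite{FaGo2,Fa6}, which rely precisely on 8.a or 8.b. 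Uniform convergence of the spatial gradients on bounded subsets of $\V$ then lets me pass to the limit in the evolutionary HJB satisfied by $Z_T$ and conclude that $\Psi$ satisfies (\ref{SHJB}) on $D(A)$.

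For uniqueness, let $\psi\in\Sigma_0(\V)$ be any classical solution of (\ref{SHJB}). Since $\psi\in\clip(\V)$, one has $\psi'(x)\in V=D(A_0^*)$, and because $B\in L(U,\V)$ implies $B^*\in L(V,U)$, the map $x\mapsto B^*\psi'(x)\in U$ is Lipschitz on $\V$. Fix $x\in D(A)$ and an admissible $u\in L^p_\lambda(0,+\infty;U)$ with mild trajectory $y$; approximating $y$ by strict solutions (Yosida regularization of $A$ and of $x$) yields
\begin{equation*}
\frac{d}{d\tau}\bigl[e^{-\lambda\tau}\psi(y(\tau))\bigr] = e^{-\lambda\tau}\bigl[-\lambda\psi(y)+\langle\psi'(y),Ay\rangle+\langle B^*\psi'(y),u\rangle_U\bigr].
\end{equation*}
Substituting the HJB identity and using the Fenchel inequality $h_0^*(-B^*\psi'(y))+h_0(u)\ge -\langle B^*\psi'(y),u\rangle_U$, with equality iff $u=(h_0^*)'(-B^*\psi'(y))$, then integrating on $[0,T]$ and sending $T\to\infty$ gives $\psi(x)\le J_\infty(0,x,u)$, provided the transversality $e^{-\lambda T}\psi(y(T))\to 0$ holds. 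Taking the infimum over $u$ yields $\psi\le\Psi$; substituting the closed-loop control $u^*(\tau)=(h_0^*)'(-B^*\psi'(y^*(\tau)))$ realizes equality in Fenchel and, once admissibility of $u^*$ in $L^p_\lambda$ is verified via Assumptions 6--7, produces $\psi(x)=J_\infty(0,x,u^*)\ge\Psi(x)$. Hence $\psi=\Psi$ on $D(A)$ and, by continuity, on all of $\V$.

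The main obstacle is the unboundedness of $B$: it forbids the chain rule along mild trajectories in the naive space $H$ and requires the extended framework of Remark \ref{noidentif}, in which $B$ is bounded into $\V$ and the $\clip$ regularity of $\psi$ on $\V$ automatically places $\psi'(y)$ in $V=D(A_0^*)$, where $B^*$ acts boundedly. The second delicate point is the transversality condition: under 8.a, the assumptions $p>2$ and $\lambda>2\omega$ yield a Gr\"onwall-type bound on $|y(T)|_{\V}$ which, combined with $\psi\in\B_2(\V)$, forces $e^{-\lambda T}\psi(y(T))\to 0$; under 8.b, the linear growth of $g_0,\phi_0$ propagates to $\Psi$ and the weaker condition $\lambda>\omega$ suffices. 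All remaining steps, namely admissibility of the feedback $u^*$, approximation of mild by strict solutions, and uniform gradient estimates, are standard once these two issues are settled.
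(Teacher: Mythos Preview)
The paper does not actually prove this theorem: it is stated in Section~\ref{prel} (``Preliminaries''), which opens with ``We here recall all the relevant results that are needed in the sequel. The reader may find the proofs of all statements in \cite{FaGo2,Fa6}.'' So there is no proof in the present paper to compare your proposal against; Theorem~\ref{th:mainnew} is imported wholesale from those references.

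That said, your sketch is faithful to the strategy pursued in \cite{FaGo2,Fa6}: existence via the finite-horizon value functions $Z_T$ (which solve the evolutionary HJB in $\mathcal{Y}$), uniform-in-$T$ estimates in $\Sigma_0(\V)$ allowing passage to the limit, and uniqueness by a verification argument hinging on the Fenchel inequality and a transversality condition. Your identification of the two genuine difficulties---making the chain rule legitimate along mild trajectories despite the unboundedness of $B$ in $H$, and controlling $e^{-\lambda T}\psi(y(T))$ under either hypothesis 8.a or 8.b---is accurate, and your remark that $\psi\in C^1(\V)$ forces $\psi'(x)\in(\V)'=V$, so that $B^*\psi'(x)$ is well defined, is exactly the mechanism the extended-space framework is designed to exploit. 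The only point that deserves more care than you indicate is the claim of \emph{uniform} convergence of $\nabla Z_T$ on bounded sets: in \cite{FaGo2} this is obtained not merely from equi-Lipschitz bounds but from an explicit contraction/stability estimate for the HJB semigroup, and you should cite or reproduce that step rather than treat it as automatic.
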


Once we have established that $\Psi$ is the classical solution to
the stationary HJB equation, and that it is differentiable, we can
build optimal feedbacks and prove the following theorem.

\begin{theo}\label{th:uniquefeedback}
Let Assumptions \ref{asst2} hold. Let $t\ge 0$ and $x\in \V$ be
fixed. Then there exists a unique optimal pair $(u^*,y^*)$. The
optimal state $y^*$ is the unique solution of the Closed Loop
Equation
\begin{equation}\label{CLE} y(\t)=e^{(\t-t)A}x+\int_t^\t
e^{(\t-\sigma)A}B(h_0^*)'(-B^*\Psi^\prime(y(s))) d\sigma,\ \
\tau\in[t,+\infty[ .\
\end{equation}
while the optimal control $u^*$ is given by the feedback formula
$$
u^*(s) = (h_0^*)'(-B^*\Psi^\prime(y^*(s))).
$$
\end{theo}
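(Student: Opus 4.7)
The plan is to run a verification-theorem argument for $\Psi$, adapted to the unbounded-control-operator setting. By Theorem \ref{th:mainnew}, $\Psi\in\Sigma_0(\V)\subset\clip(\V)$, so $\Psi'(z)\in V$ and $B^*\Psi'(z)\in U$ are well defined for every $z\in\V$. Fix $t\geq 0$, $x\in\V$, and an admissible $u\in L^p_\lambda(t,+\infty;U)$ with mild trajectory $y$ given by (\ref{sevc}). Differentiating $s\mapsto e^{-\lambda s}\Psi(y(s))$ along $y$, via a standard approximation (regularizing the data so that $y$ becomes a strong solution, or smoothing $\Psi$ along the semigroup as in \cite{FaGo2,Fa6}), yields the fundamental identity
\[
e^{-\lambda T}\Psi(y(T))-e^{-\lambda t}\Psi(x)=\int_t^T e^{-\lambda s}\bigl[-\lambda\Psi(y(s))+\langle\Psi'(y(s)),Ay(s)+Bu(s)\rangle\bigr]\,ds,
\]
in which $\langle\Psi'(y(s)),Bu(s)\rangle$ is read as $\langle B^*\Psi'(y(s)),u(s)\rangle_U$, legal because $\Psi'(y(s))\in V$.

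Substituting the stationary HJB equation (\ref{SHJB}) inside the bracket turns the right-hand side into
\[
\int_t^T e^{-\lambda s}\bigl[h_0^*(-B^*\Psi'(y(s)))+\langle B^*\Psi'(y(s)),u(s)\rangle_U-g_0(y(s))\bigr]\,ds,
\]
and the Fenchel inequality $h_0^*(v)+h_0(u)\geq\langle v,u\rangle_U$, applied with $v=-B^*\Psi'(y(s))$, bounds the integrand below by $-h_0(u(s))-g_0(y(s))$, with equality if and only if $u(s)=(h_0^*)'(-B^*\Psi'(y(s)))$; the equivalence uses the injectivity of $\partial h_0$ and $h_0^*\in\clip$ from Assumption \ref{asst2}.5--6, which makes $(h_0^*)'$ well defined and single-valued. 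Letting $T\to+\infty$ and invoking the transversality condition $e^{-\lambda T}\Psi(y(T))\to 0$ (a consequence of $\Psi\in\B_2(\V)$, the a priori growth of $|y(\tau)|_{\V}$ for admissible controls, and either Assumption \ref{asst2}.8.a or 8.b) produces
\[
e^{-\lambda t}\Psi(x)\leq J_\infty(t,x,u),
\]
with equality precisely along controls that obey the feedback formula pointwise. Combined with $Z_\infty(t,x)=e^{-\lambda t}\Psi(x)$ from Theorem \ref{th:mainnew}, this shows that every optimal control must coincide a.e.\ with $(h_0^*)'(-B^*\Psi'(y^*(\cdot)))$ and therefore must solve (\ref{CLE}).

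It then remains to construct the optimal pair and prove uniqueness for (\ref{CLE}). The feedback map $G(z):=(h_0^*)'(-B^*\Psi'(z))$ is Lipschitz from $\V$ to $U$, since $\Psi'$ is Lipschitz from $\V$ to $V$ (from $\Psi\in\clip(\V)$), $B^*$ is bounded from $V$ to $U$ (from $B\in L(U,\V)$), and $(h_0^*)'$ is Lipschitz on $U$ (from Assumption \ref{asst2}.6). A standard contraction argument applied to
\[
(\Gamma y)(\t):=e^{(\t-t)A}x+\int_t^\t e^{(\t-\sigma)A}BG(y(\sigma))\,d\sigma
\]
in $C([t,t+\delta];\V)$ for small $\delta>0$, extended by concatenation to the whole half line, produces a unique fixed point $y^*$; setting $u^*:=G(y^*)$ gives an admissible pair that saturates the verification inequality and is therefore optimal, and by the first part it is the unique optimal pair. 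The delicate point throughout is the justification of the fundamental identity, because generic mild solutions $y(s)$ do not lie in $D(A)$ and $\langle\Psi'(y(s)),Ay(s)\rangle$ is not a plain duality pairing; this obstruction is overcome by the semigroup-smoothing and growth estimates already developed in \cite{FaGo2,Fa6}, so the present argument is essentially a reassembly of those techniques applied to the classical solution $\Psi$ of (\ref{SHJB}).
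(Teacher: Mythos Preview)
The paper does not actually prove this theorem: it is stated in Section~\ref{prel} as a preliminary result, with the blanket remark that ``the reader may find the proofs of all statements in \cite{FaGo2,Fa6}.'' Your proposal reproduces the standard verification-theorem scheme used in those references---fundamental identity for $e^{-\lambda s}\Psi(y(s))$, substitution of the stationary HJB, Fenchel--Young inequality with equality characterizing the feedback, transversality to pass to $T=+\infty$, and a Lipschitz fixed-point argument for the closed loop equation---and you correctly flag the only genuinely delicate step (justifying the identity for mild trajectories via the approximation machinery of \cite{FaGo2,Fa6}). So your approach is correct and coincides with the one the paper defers to.
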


Let $x\in \V$ and $t\ge0$ be fixed, and consider the dual system
associated to (\ref{eq stato in H}), that is
\begin{equation}\label{cost}\p^\prime(\tau)=(\lambda -A_0^*)
\p(\tau)-g_0^\prime(y(\tau)), \quad \
\tau\in[t,+\infty)\end{equation} where $\p:[t,+\infty)\to V$ (the
dual variable, or co-state of the system)  is the unknown, and
$y=y(\cdot;t,x, u)$ is the trajectory starting at $x$ at time $t$
and driven by control $u$, given by (\ref{sevc}). We assume such
equation is also subject to the following transversality condition
\begin{equation}\label{tc}
\lim_{T\to+\infty}\p(T)=0.\end{equation} We denote any solution of
(\ref{cost})(\ref{tc}) also by $\p(\cdot;t,x,u)$ or by
$\p(\cdot;t,x)$ to remark its dependence on the data.

\begin{defi} Let Assumptions \ref{asst2} [1-7] be satisfied.
We define the mild solution of (\ref{cost})(\ref{tc}) as the
function $\p:[t,+\infty)\to V$ given by
\begin{equation}\label{pmild} \p(\tau)=\int_\tau^{+\infty}
e^{(A_0^*-\lambda)(\sigma-\tau)}g_0^\prime(y(\sigma))d\sigma.\end{equation}
\end{defi}

In the sequel we show that such definition is natural.

\begin{lemma}\label{p^*def} Let Assumption \ref{asst2} [1-7] be satisfied,
and assume $p\ge2$ and $\lambda>2\omega$. Then $\p$ given by
(\ref{pmild}) is well defined and belongs to $C^0(t,+\infty;V)$.

Moreover:

$(i)$ if $p> 2$ then $\p\in L^q_\lambda(t,+\infty;V);$

$(ii)$ if $p=2$ then $\p\in L^2_{\lambda+\e}(t,+\infty;V)\cap
L^2(t,T;V),\ \forall \ T<+\infty, \ \e>0.$ \end{lemma}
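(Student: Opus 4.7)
The plan is to derive a single exponential pointwise bound on $|\p(\tau)|_V$ and deduce everything from it. Since $g_0\in\Sigma_0(\V)$, the gradient $g_0'$ is Lipschitz as a map $\V\to V$, whence $|g_0'(v)|_V\le K(1+|v|_\V)$ for some $K>0$. Next I bound $|y(\sigma)|_\V$: applying the semigroup estimate in Assumption \ref{asst2}(3) and $B\in L(U,\V)$ to the variation of constants formula (\ref{sevc}), and then Hölder's inequality to the convolution term---writing $e^{\omega(\sigma-s)}|u(s)|_U = \bigl[e^{\omega(\sigma-s)+\lambda s/p}\bigr]\bigl[e^{-\lambda s/p}|u(s)|_U\bigr]$ for $u\in L^p_\lambda(t,+\infty;U)$---one obtains
\begin{equation*}
|y(\sigma)|_\V\le C\bigl(1+e^{\gamma\sigma}\bigr),\qquad \gamma:=\max\{\omega,\lambda/p\},
\end{equation*}
where $C$ depends on $t$, $|x|_\V$, $|B|_{L(U,\V)}$, and $\|u\|_{L^p_\lambda}$. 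The two cases $\lambda/p\gtrless\omega$ correspond to whether the weight $e^{q(\lambda/p-\omega)s}$ grows or decays in $s$.

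By duality, Assumption \ref{asst2}(3) also yields $|e^{(A_0^*-\lambda)s}w|_V\le e^{(\omega-\lambda)s}|w|_V$ for $s\ge0$ and $w\in V$. Combining with the bounds on $g_0'$ and on $y$, the integrand of (\ref{pmild}) is dominated in the $V$-norm by $C'e^{(\omega-\lambda)(\sigma-\tau)}(1+e^{\gamma\sigma})$. The crucial elementary observation is that $\lambda>2\omega$ together with $p\ge2$ imply $\gamma<\lambda-\omega$: clearly $\omega<\lambda-\omega$, while $\lambda/p<\lambda-\omega$ rearranges to $\lambda>p\omega/(p-1)$, which is automatic from $\lambda>2\omega$ since $p/(p-1)\le2$ for $p\ge2$. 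Hence the dominating function is integrable on $[\tau,+\infty)$ and $\p(\tau)$ is a well-defined $V$-valued Bochner integral satisfying
\begin{equation*}
|\p(\tau)|_V \le C''\bigl(1+e^{\gamma\tau}\bigr).
\end{equation*}
Continuity of $\tau\mapsto\p(\tau)$ then follows after the substitution $\sigma=\tau+\eta$ in (\ref{pmild}) by dominated convergence, exploiting the continuity of $y$ in $\V$, of $g_0'$ as a map $\V\to V$, and the strong continuity of $\{e^{sA_0^*}\}$ on $V$.

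The integrability claims (i) and (ii) follow by plugging the pointwise bound into the weighted norms. For (i), with $p>2$ and $q=p/(p-1)<2$, one checks $q\gamma=\max\{q\omega,\lambda/(p-1)\}<\lambda$: indeed $q\omega<\lambda$ thanks to $\lambda>2\omega>q\omega$ for $p>2$, and $\lambda/(p-1)<\lambda$ thanks to $p>2$; hence $\int_t^{+\infty}e^{-\lambda\tau}|\p(\tau)|_V^q\,d\tau<+\infty$, i.e. $\p\in L^q_\lambda(t,+\infty;V)$. For (ii), $p=2$ forces $\gamma=\lambda/2$ (since $\lambda>2\omega$), so $|\p(\tau)|_V^2\le C(1+e^{\lambda\tau})$: multiplication by $e^{-(\lambda+\e)\tau}$ provides the extra factor $e^{-\e\tau}$ needed for integrability on $[t,+\infty)$, while the pointwise bound is automatically locally bounded, yielding both halves of (ii). The main obstacle is bookkeeping in the two regimes $\gamma=\omega$ vs.\ $\gamma=\lambda/p$; the endpoint case $p=2$ is tight, with $q\gamma=\lambda$ exactly, which is precisely why the weight must be relaxed to $\lambda+\e$ in (ii).
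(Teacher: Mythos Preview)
The paper does not contain its own proof of this lemma: Section~\ref{prel} explicitly defers all proofs of the recalled results to \cite{FaGo2,Fa6}. Your argument is correct and is the natural one---derive an exponential growth bound on the trajectory from (\ref{sevc}), transfer it to $g_0'(y(\cdot))$ via the Lipschitz property built into $\Sigma_0(\V)$, and integrate against the decaying factor $e^{(\omega-\lambda)(\sigma-\tau)}$; the key inequality $\gamma<\lambda-\omega$ is exactly what the hypotheses $\lambda>2\omega$ and $p\ge2$ are designed to guarantee, and your endpoint analysis showing why $p=2$ forces the relaxed weight $\lambda+\e$ is precisely the reason for the dichotomy (i)/(ii).

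One small point of bookkeeping: in the borderline case $\lambda/p=\omega$ (which can occur for $p>2$, e.g.\ $\lambda=p\omega$), the H\"older step produces an extra polynomial factor $(\sigma-t)^{1/q}$ in the bound on $\vert y(\sigma)\vert_\V$, so the estimate $\vert y(\sigma)\vert_\V\le C(1+e^{\gamma\sigma})$ is not literally correct there. This is harmless for your purposes, since the strict inequality $\gamma<\lambda-\omega$ leaves room to absorb any polynomial into a slightly larger exponential rate $\gamma'\in(\gamma,\lambda-\omega)$, and all subsequent integrability conclusions go through unchanged. You might add a one-line remark to cover this case.
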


\begin{theo} If $\p\in W^{1,1}(t,+\infty; V)$ satisfies $(\ref{cost})$ almost everywhere in
$[t,+\infty)$ and  $(\ref{tc})$ then $\p$ is given by
(\ref{pmild}), that is $\pi$ is the mild solution of
$(\ref{cost})(\ref{tc})$.
\end{theo}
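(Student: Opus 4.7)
The plan is to reduce the backward-type equation (\ref{cost})--(\ref{tc}) to a standard inhomogeneous forward Cauchy problem via time reversal, apply the classical variation-of-constants formula, and pass to the limit $T \to +\infty$ using the transversality condition. Fix $\tau \geq t$ and, for every $T > \tau$, define $\rho(\sigma) := \pi(T - \sigma)$ for $\sigma \in [0, T - t]$. Because $\pi \in W^{1,1}(t, +\infty; V)$ and (\ref{cost}) holds almost everywhere, $\rho$ lies in $W^{1,1}(0, T - t; V)$ and satisfies, for a.e.\ $\sigma$,
\[
\rho'(\sigma) = -\pi'(T - \sigma) = (A_0^* - \lambda)\rho(\sigma) + g_0'(y(T - \sigma)).
\]
Thus $\rho$ is a strong solution of a forward linear inhomogeneous equation whose principal part $A_0^* - \lambda$ generates the rescaled $C_0$-semigroup $\{e^{(A_0^* - \lambda) s}\}_{s \geq 0} = \{e^{-\lambda s} e^{s A_0^*}\}_{s \geq 0}$ on $V$.

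The next step is to invoke the standard fact that any such $W^{1,1}$-strong solution coincides with the mild solution of the associated Cauchy problem, yielding
\[
\rho(\sigma) = e^{(A_0^* - \lambda)\sigma}\rho(0) + \int_0^\sigma e^{(A_0^* - \lambda)(\sigma - r)} g_0'(y(T - r))\, dr.
\]
Setting $\sigma = T - \tau$, performing the change of variable $s = T - r$ in the integral, and recalling $\rho(T - \tau) = \pi(\tau)$ and $\rho(0) = \pi(T)$, this identity rewrites as
\[
\pi(\tau) = e^{(A_0^* - \lambda)(T - \tau)} \pi(T) + \int_\tau^T e^{(A_0^* - \lambda)(s - \tau)} g_0'(y(s))\, ds.
\]

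Finally I would let $T \to +\infty$. The semigroup estimate in Assumption \ref{asst2}, combined with $\lambda > \omega$ (which holds under either 8.a or 8.b), yields $\|e^{(A_0^* - \lambda) s}\|_{L(V)} \leq M e^{(\omega - \lambda) s}$ with $\omega - \lambda < 0$; together with the transversality $\pi(T) \to 0$ in $V$ this forces the boundary term to vanish, while the exponential decay of the kernel and the integrability of $g_0'(y(\cdot))$ already used in the proof of Lemma \ref{p^*def} give convergence of the integral to the right-hand side of (\ref{pmild}). The most delicate point of the argument is the passage from strong to mild solution in the presence of the unbounded operator $A_0^*$; rigorously, this is the classical semigroup identity obtained by differentiating $s \mapsto e^{(A_0^* - \lambda)(\sigma - s)} \rho(s)$ and using the equation for $\rho$, a calculation that is legitimate precisely because the $W^{1,1}$-regularity of $\pi$ forces $\rho(s) \in D(A_0^*)$ for a.e.\ $s$, so that the commutation of $A_0^*$ with its semigroup applies on a set of full measure.
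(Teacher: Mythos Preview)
The paper does not supply a proof of this theorem; Section~\ref{prel} collects preliminary results and explicitly defers all proofs to \cite{FaGo2,Fa6}. So there is no in-paper argument to compare against, and your proof has to be assessed on its own merits.

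Your approach is the natural one and is correct in structure: reverse time to obtain a forward Cauchy problem for the generator $A_0^*-\lambda$ on $V$, write the variation-of-constants identity on $[0,T-\tau]$, undo the substitution, and let $T\to+\infty$ using (\ref{tc}) and the growth bound to kill the boundary term while Lemma~\ref{p^*def} guarantees convergence of the integral. Two small points deserve tightening. First, the bound $\Vert e^{sA_0^*}\Vert_{L(V)}\le e^{\omega s}$ is not literally Assumption~\ref{asst2}[3], which is stated for $e^{sA}$ on $\V$; you should remark that the same estimate holds for $e^{sA_0^*}$ on $H$ by duality and transfers to $V=D(A_0^*)$ via the graph norm, since $A_0^*$ commutes with its semigroup. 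Second, your final sentence slightly misattributes the key fact: it is not the $W^{1,1}$-regularity that forces $\rho(s)\in D(A_0^*)$, but simply that $\pi$ takes values in $V=D(A_0^*)$ by hypothesis, so $\rho(s)\in D(A_0^*)$ for \emph{every} $s$. What $W^{1,1}$ actually buys you is the a.e.\ differentiability of $s\mapsto\rho(s)$ and the validity of the fundamental theorem of calculus after integrating the derivative of $s\mapsto e^{(A_0^*-\lambda)(\sigma-s)}\rho(s)$, which is computed in $H$ (both factors are differentiable there) and then identified in $V$ since both sides of the resulting identity lie in $V$. With these clarifications the argument is complete.
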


\begin{rem} Assume $p\ge 2$, $\lambda>2\omega$. Then:
\begin{itemize}\item $\lambda\le\omega p$ implies
$y\in L^r(t,+\infty;\V)$ for all $r<\frac{\lambda}{p}$, and $y\in
L^{\frac{\lambda}{p}}(t,T;\V)$ for all $T<+\infty$; \item
$\lambda>\omega p$ implies $y\in L^r(t,+\infty;\V)$ for all
$r<{p}$, and $y\in L^p(t,T;\V)$ for all $T<+\infty$.
\end{itemize}\end{rem}

\begin{defi} Let Assumption \ref{asst2} [1-7] be satisfied,
and assume $p\ge2$ and $\lambda>2\omega$. Let also $T>t$ be either
finite or $+\infty$. We say that a given pair $(u,y)\in
L^p_\lambda(t,T;U)\times L^1_{loc}(t,T;\V)$ is {\rm extremal} if
and only if there exists a function $\p\in L^q_\lambda(t,T;V)$
satisfying in mild sense, along with $u$ and $y$, the following
set of equations
$$y^\prime(\tau)=Ay(\tau)+Bu(\tau),\ \tau\in[ t,T);\ \ y(t)=x $$
$$\p^\prime(\tau)=(\lambda -A_0^*) \p(\tau)-g_0^\prime(y(\tau)),
 \ \tau\in[ t,T);$$
 $$\lim_{s\to+\infty}\p(s)=0, when \ T=+\infty;\ \ \ \p(T)=0,\ when \ T<+\infty$$
 \begin{equation}\label{mp}-B^* \p(\tau)\in \partial h_0(u(\tau)),
\ for \ a. a.\ \tau\in[t,T).\end{equation}
\end{defi}

\begin{theo}\label{Pmp} {\rm \textbf{(Maximum Principle).}}
Let Assumptions \ref{asst2} [1-7] be satisfied, $\lambda>2\omega$.
Then, for all $p\ge2$ and $T<+\infty$, the couple $(u^*,y^*)$ is
optimal at $(t,x)$ - for the problem of minimizing (\ref{eq stato
in H})(\ref{J-t-T}) - if and only if it is extremal.
\end{theo}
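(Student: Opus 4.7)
The plan is to prove the two implications separately, in both cases exploiting the convexity of $g_0$ and $h_0$ together with an integration-by-parts identity linking the (mild) state and costate equations on $[t,T]$; the finite-horizon cost $J$ here is the obvious truncation $\int_t^T e^{-\lambda\tau}[g_0(y(\tau))+h_0(u(\tau))]\,d\tau$ consistent with the extremal system having $\pi(T)=0$.

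For the direction \emph{extremal $\Rightarrow$ optimal}, I would pick any admissible $v\in L^p_\lambda(t,T;U)$ with corresponding trajectory $z$ starting from $x$ at time $t$. The convexity of $g_0$ and $h_0$ gives the pointwise inequalities
\[g_0(z(\tau))-g_0(y(\tau))\ge\langle g_0^\prime(y(\tau)),z(\tau)-y(\tau)\rangle,\qquad h_0(v(\tau))-h_0(u(\tau))\ge-\langle B^*\pi(\tau),v(\tau)-u(\tau)\rangle,\]
the second coming from the extremality condition (\ref{mp}). I then combine them with the identity obtained by differentiating $\tau\mapsto e^{-\lambda\tau}\langle\pi(\tau),z(\tau)-y(\tau)\rangle$: substituting the state equation for $z-y$ and the costate equation for $\pi$, and using the duality $\langle A(z-y),\pi\rangle_{\V,V}=\langle z-y,A_0^*\pi\rangle_{\V,V}$, I expect to obtain
\[\tfrac{d}{d\tau}\bigl\langle e^{-\lambda\tau}\pi,z-y\bigr\rangle=e^{-\lambda\tau}\bigl[-\langle g_0^\prime(y),z-y\rangle+\langle B^*\pi,v-u\rangle\bigr].\]
Integrating over $[t,T]$ and using $z(t)=y(t)=x$ together with $\pi(T)=0$ kills both boundary terms; adding the two convexity bounds then yields $J(t,x,v)\ge J(t,x,u)$.

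For \emph{optimal $\Rightarrow$ extremal}, given an optimal $(u^*,y^*)$ I would set
\[\pi^*(\tau):=\int_\tau^T e^{(A_0^*-\lambda)(\sigma-\tau)}\,g_0^\prime(y^*(\sigma))\,d\sigma,\]
the finite-horizon truncation of (\ref{pmild}); by construction $\pi^*(T)=0$ and $\pi^*$ is a mild solution of the costate equation, with the required $L^q_\lambda$-regularity following from an adaptation of Lemma~\ref{p^*def} to the bounded interval $[t,T]$. To verify $-B^*\pi^*(\tau)\in\partial h_0(u^*(\tau))$ a.e., I would perform a convex variation: for $w\in L^p_\lambda(t,T;U)$ and $\epsilon\in(0,1)$, optimality yields
\[0\le\tfrac{1}{\epsilon}\bigl[J(t,x,u^*+\epsilon(w-u^*))-J(t,x,u^*)\bigr]\xrightarrow{\epsilon\to 0^+}\int_t^T e^{-\lambda\tau}\bigl[\langle g_0^\prime(y^*),z\rangle+\langle\eta,w-u^*\rangle\bigr]\,d\tau,\]
for every measurable selection $\eta(\tau)\in\partial h_0(u^*(\tau))$, where $z$ solves the linearized state equation $z^\prime=Az+B(w-u^*)$ with $z(t)=0$. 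The same integration-by-parts identity as above, now with $z(t)=0$ in place of $z(t)-y(t)=0$, rewrites this as $\int_t^T e^{-\lambda\tau}\langle\eta+B^*\pi^*,w-u^*\rangle\,d\tau\ge 0$, and arbitrariness of $w$ forces $-B^*\pi^*(\tau)\in\partial h_0(u^*(\tau))$ a.e., which is precisely (\ref{mp}).

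The main technical obstacle is the rigorous justification of the integration-by-parts identity at the mild level: neither $y$ nor $z$ lies in $D(A)$ in general, $B$ takes values only in the extended space $\V$, and the duality $\langle Ay,\pi\rangle=\langle y,A_0^*\pi\rangle$ must be read in the generalized sense inherited from the construction of $A$ as the $\V$-extension of $A_0$ recalled in Remark~\ref{noidentif}. My standard remedy would be to smooth $\pi$ by $\pi_n:=e^{(1/n)A_0^*}\pi$, which lives in a sufficiently high power of $D(A_0^*)$ to make the duality classical, carry out the computation in the strong sense, and pass to the limit using the integrability and continuity bounds supplied by Lemma~\ref{p^*def} and Assumption~\ref{asst2}(3). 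Once this identity is secured, the remaining convex-analytic steps are routine and essentially reproduce the finite-horizon argument in \cite{FaGo2}.
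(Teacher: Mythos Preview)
The paper does not actually prove this theorem: it is stated in Section~\ref{prel} (``Preliminaries'') as a result recalled from \cite{FaGo2,Fa6}, with the explicit disclaimer that ``the reader may find the proofs of all statements in \cite{FaGo2,Fa6}.'' There is therefore no in-paper proof to compare against. Your sketch follows the standard convex-analysis route for linear--convex maximum principles (duality pairing of state and costate, integration by parts, convexity inequalities), which is indeed the approach taken in the cited references, so in spirit you are reproducing what the authors have in mind.

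One genuine slip in your argument for \emph{optimal $\Rightarrow$ extremal}: the limit of the difference quotient of the $h_0$-term is the \emph{directional derivative} $h_0^\prime(u^*(\tau);w(\tau)-u^*(\tau))=\sup_{\eta\in\partial h_0(u^*(\tau))}\langle\eta,w(\tau)-u^*(\tau)\rangle$, not $\langle\eta,w-u^*\rangle$ for an \emph{arbitrary} measurable selection $\eta(\tau)\in\partial h_0(u^*(\tau))$. With your phrasing the final inequality $\int_t^T e^{-\lambda\tau}\langle\eta+B^*\pi^*,w-u^*\rangle\,d\tau\ge 0$ need not hold for every such $\eta$, and ``arbitrariness of $w$'' would then force $\eta=-B^*\pi^*$ for \emph{every} selection, which is generally false when $\partial h_0(u^*)$ is not a singleton. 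The correct conclusion from the variational step is
\[
\int_t^T e^{-\lambda\tau}\bigl[\langle B^*\pi^*(\tau),w(\tau)-u^*(\tau)\rangle+h_0^\prime(u^*(\tau);w(\tau)-u^*(\tau))\bigr]\,d\tau\ \ge\ 0,
\]
and a standard localization in $w$ then yields $h_0^\prime(u^*(\tau);d)\ge\langle -B^*\pi^*(\tau),d\rangle$ for all $d\in U$ and a.e.\ $\tau$, which is exactly $-B^*\pi^*(\tau)\in\partial h_0(u^*(\tau))$. Once this is corrected, the remainder of your plan---including the regularization $\pi_n=e^{(1/n)A_0^*}\pi$ to justify the duality identity at the mild level---is sound.
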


\begin{theo} \label{p^*} Let $(u^*,y^*)$ be optimal at $(0,x)$ and let $\p^*(\cdot;0,x)$
be the associated co-state. Then
$$\Psi^\prime(x)=\p^*(0;0,x).$$
Consequently,
$$\Psi^\prime(y^*(\tau))=\p^*(\tau;\tau,y^*(\tau)).$$
\end{theo}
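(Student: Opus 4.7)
The plan is to establish $\Psi'(x)=\p^*(0;0,x)$ by a one-sided variational comparison of envelope type, and then to deduce the second identity from the dynamic programming principle.

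Fix $h\in\V$ and feed the optimal control $u^*$ (for initial state $x$) into the state equation with the perturbed initial datum $x+h$. By linearity of (\ref{sevc}), the resulting trajectory is $y^*(\t)+e^{\t A}h$, so $\Psi(x+h)\le J_\infty(0,x+h,u^*)$, and subtracting $\Psi(x)=J_\infty(0,x,u^*)$ yields
\begin{equation*}
\Psi(x+h)-\Psi(x)\le \int_0^{+\infty}e^{-\lambda \t}\bigl[g_0(y^*(\t)+e^{\t A}h)-g_0(y^*(\t))\bigr]\,d\t.
\end{equation*}
Because $g_0\in\Sigma_0(\V)$ has Lipschitz derivative (constant $L$, say), the elementary estimate
$\bigl|g_0(y+w)-g_0(y)-\langle g_0'(y),w\rangle\bigr|\le \tfrac{L}{2}|w|_{\V}^{2}$,
applied with $y=y^*(\t)$ and $w=e^{\t A}h$, combined with $|e^{\t A}h|_{\V}\le e^{\omega\t}|h|_{\V}$ and the standing hypothesis $\lambda>2\omega$ (which makes $\int_0^{+\infty}e^{(-\lambda+2\omega)\t}d\t$ finite), gives
\begin{equation*}
\Psi(x+h)-\Psi(x)\le \int_0^{+\infty}e^{-\lambda \t}\langle g_0'(y^*(\t)),e^{\t A}h\rangle\,d\t + C|h|_{\V}^{2}.
\end{equation*}
Transposing $e^{\t A}$ to its dual on $V$, which by the duality construction in Remark \ref{noidentif} is the restriction of $e^{\t A_0^*}$, and recognising the mild-solution formula (\ref{pmild}) evaluated at $\t=0$, the linear term rewrites as $\langle\p^*(0),h\rangle$. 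Thus
\begin{equation*}
\Psi(x+h)-\Psi(x)\le \langle\p^*(0),h\rangle + C|h|_{\V}^{2}.
\end{equation*}

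Theorem \ref{th:mainnew} asserts $\Psi\in\Sigma_0(\V)\subset\clip(\V)$, so $\Psi$ is Fréchet differentiable at $x$ and $\Psi(x+h)-\Psi(x)=\langle\Psi'(x),h\rangle+o(|h|_{\V})$. Substituting $h=\e v$ for a fixed $v\in\V$ and $\e\to 0^+$ yields $\langle\Psi'(x)-\p^*(0),v\rangle\le 0$; repeating with $-v$ gives the reverse inequality, and hence $\Psi'(x)=\p^*(0;0,x)$ as elements of $V$. For the consequent identity, observe that by dynamic programming the time-shifted pair $(u^*(\cdot+\t),y^*(\cdot+\t))$ is optimal from the initial state $y^*(\t)$ at time $0$; applying the identity just proved to this restarted problem and performing the change of variable $\sigma\mapsto\sigma-\t$ in (\ref{pmild}) shows that the initial costate of the shifted problem equals $\p^*(\t;\t,y^*(\t))$, whence $\Psi'(y^*(\t))=\p^*(\t;\t,y^*(\t))$.

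The delicate point is that the perturbation furnishes only a one-sided bound on $\Psi(x+h)-\Psi(x)$; it is precisely the a priori $C^1$-regularity of $\Psi$ (inherited from $\Sigma_0(\V)$) that allows the envelope-type argument to upgrade the one-sided estimate into the identity $\Psi'(x)=\p^*(0)$, thereby bypassing any need for continuous dependence of the optimal control on the initial state. The integrability underlying the Taylor remainder rests squarely on $\lambda>2\omega$, and the identification of $(e^{\t A})^*$ with $e^{\t A_0^*}$ on $V$ is a routine consequence of the duality setup of Remark \ref{noidentif}.
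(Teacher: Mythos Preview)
The paper does not actually prove Theorem~\ref{p^*}: it is one of the preliminary results in Section~\ref{prel} whose proofs are explicitly deferred to \cite{FaGo2,Fa6}. Consequently there is no in-paper proof to compare against, and I can only assess your argument on its own merits and indicate how it likely relates to the approach in those references.

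Your envelope argument is correct. The key steps---feeding $u^*$ into the state equation with perturbed datum $x+h$, exploiting linearity of (\ref{sevc}) to obtain the perturbed trajectory $y^*(\t)+e^{\t A}h$, Taylor-expanding $g_0$ using its Lipschitz derivative, and identifying the linear term with $\langle\p^*(0),h\rangle$ via the duality $(e^{\t A})^*\vert_V=e^{\t A_0^*}$---are all valid under $\lambda>2\omega$, which is precisely the hypothesis under which the co-state is defined (Lemma~\ref{p^*def}) and the Maximum Principle is stated (Theorem~\ref{Pmp}). Your observation that the a~priori Fr\'echet differentiability of $\Psi$ (from $\Psi\in\Sigma_0(\V)$, Theorem~\ref{th:mainnew}) is exactly what upgrades the one-sided estimate to the identity is the crux of the argument, and you handle it cleanly. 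The deduction of the second formula from Bellman's principle and the change of variable in (\ref{pmild}) is also correct; note incidentally that $\p^*(\t;\t,y^*(\t))=\p^*(\t;0,x)$, so the statement could equivalently be phrased in terms of the original co-state.

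As to the likely approach in \cite{FaGo2,Fa6}: given that those papers develop the Maximum Principle and the feedback formula side by side, the identity is probably obtained there by showing directly that $\t\mapsto\Psi^\prime(y^*(\t))$ is a mild solution of the co-state equation (\ref{cost})(\ref{tc}) and invoking uniqueness. Your variational route is more self-contained: it uses only the regularity of $\Psi$ and the definition (\ref{pmild}), and bypasses the Maximum Principle entirely.
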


\bigskip

\section{The motivating example}\label{esemp}
We here describe our motivating example: the infinite horizon
problem of optimal investment with vintage capital, in the setting
introduced by Barucci and Gozzi \cite{BG1}\cite{BG2}, and later
reprised and generalized by Feichtinger et al. \cite{F1,F2,F3},
and by
  Faggian \cite{Fa2,Fa3}.

The capital accumulation is described by the following system
\begin{equation}\label{ipde}\begin{cases}&\frac{\partial y( \tau
,s)}{\partial \tau }+
 \frac{\partial y( \tau ,s)}{\partial s}+
\mu y( \tau ,s) =u_1( \tau ,s), \quad (\tau,s)
\in]t,+\infty[\times
]0,\bar s]\\
& y( \tau ,0)=u_0( \tau ), \quad \tau \in]t,+\infty[\\
&y(t,s)=x(s), \quad s\in [0,\bar s]\end{cases}\end{equation} with
$t>0$ the initial time, $\bar s\in[0,+\infty]$ the maximal allowed
age, and $\tau\in[0,T[$ with  horizon $T=+\infty$. The unknown
$y(\tau,s)$ represents the amount of capital goods of age $s$
accumulated at time $\tau$, the initial datum is a function $x\in
L^2(0,\bar s)$, $\mu>0$ is a depreciation factor. Moreover,
$u_0:[t,+\infty[\to {{\mathbb R}}$ is the investment in new
capital goods ($u_0$ is  the boundary  control) while
$u_1:[t,+\infty[\times[0,\bar s]\to {{\mathbb R}}$ is the
investment at time $\tau$ in capital goods of age $s$ (hence, the
distributed control). Investments are jointly referred to as the
control $u=(u_0,u_1)$.

Besides, we consider the firm profits represented by the
functional
$$I(t,x;u_0,u_1 )=\int_t^{+\infty}e^{-\lambda\tau} [R({Q(\tau)})
-c({u(\tau)})]d\tau$$ where, for some given positive measurable
coefficient $\a$, we have that
$${Q(\tau)}=\int_0^{\bar
s}\alpha(s){{y(\tau,s)}}ds$$ is the output rate (linear in
$y(\tau)$) $R$ is a concave revenue from $Q(\tau)$ (i.e., from
$y(\tau)$). Moreover we have
$$c({u_0(\tau),u_1(\tau)})=\int_0^{\bar{s}}
  c_1(s,u_1(\tau,s))ds +c_0(u_0(\tau)),$$
  with $c_1$ indicating the investment cost rate for
  technologies of age $s$, $c_0$  the investment cost in new
  technologies, including adjustment-innovation, $c_0$, $c_1$ convex in the control variables.

The entrepreneur's problem is that of maximizing $I( t, x;u_0,u_1
\kern-1pt)$ over all state--control pairs $\{y, (\kern-1pt
u_0,u_1\kern-1pt)\kern-1pt\}$ which are solutions in a suitable
sense of equation (\ref{ipde}). Such problems are known as {\em
vintage capital} problems, for the capital goods depend jointly on
time $\tau$ and on age $s$, which is equivalent to their
dependence from time and  vintage $\tau-s$.

\bigskip
When rephrased in an infinite dimensional setting, with
$H:=L^2(0,\bar s)$ as state space, the state equation (\ref{ipde})
can be reformulated as a linear control system with an unbounded
control operator, that is

\begin{equation}\begin{cases}\label{eq:statoecinH}
y^{\prime }(\tau)=A_0 y(\tau)+Bu(\tau), &\tau\in]t,+\infty[;\\
y(t)=x,\end{cases}
\end{equation}
where $y:[t,+\infty[\to H$, $x\in H$, $A_0:D(A_0)\subset H\to H$
is the infinitesimal generator of a strongly continuous semigroup
$\{ e^{\A t}\}_{t\ge0}$ on $H$ with domain $D(A_0)=\{f\in
H^1(0,\bar s):f(0)=0\}$ and defined as $A_0 f(s)=-f^\prime(s)-\mu
f(s)$, the control space is $U={{\mathbb R}}\times H$, the control
function is a couple $u\equiv(u_0,u_1):[t,+\infty[\to {{\mathbb
R}}\times H$, and the control operator is given by $Bu\equiv
B(u_0,u_1)= u_1+u_0\delta _{0}$, for all $(u_0,u_1)\in{{\mathbb
R}}\times H$, $\delta_0$ being the Dirac delta at the point $0$.
Note that, although $B\not\in L(U,H)$, is $B\in
L(U,D(\A^*)^\prime)$. The reader can find in \cite{BG1} the
(simple) proof of the following theorem, which we will exploit in
a short while.
\begin{theo} \label{th:statoinH} Given any initial datum $x\in H$ and control
$u \in L^p_\lambda (t,+\infty;U)$ the mild solution of the
equation (\ref{eq:statoecinH})
$$
y(s)=e^{(s-t)A}x+\int_t^s e^{(s-\tau)A}Bu(\tau)d\tau
$$
belongs to $C([t,+\infty);H)$.
\end{theo}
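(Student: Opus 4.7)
The plan is to exploit the special form $B(u_0, u_1) = u_1 + u_0 \delta_0$ of the control operator and treat the two components separately. The free-evolution term $e^{(s-t)A}x$ lies in $C([t,+\infty); H)$ because $\{e^{\sigma A_0}\}_{\sigma \ge 0}$ is a $C_0$-semigroup on $H$ and $x \in H$. The convolution against the distributed component $u_1 \in L^p_\lambda(t,+\infty;H)$ is a standard Bochner convolution of an $L^1_{\mathrm{loc}}$ integrand against a strongly continuous semigroup on $H$, so it too lies in $C([t,+\infty); H)$. The problem therefore reduces to showing that the boundary contribution $s \mapsto \int_t^s e^{(s-\tau)A} u_0(\tau) \delta_0 \, d\tau$ is an $H$-valued continuous function of $s$.

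For this piece my plan is to compute it \emph{explicitly} using the fact that $A_0 f = -f' - \mu f$ is a transport-decay operator admitting a simple characteristic representation. Along the characteristic $\tau \mapsto a - (s - \tau)$ the Dirac datum at the boundary $a = 0$ is propagated into the interior with exponential damping, yielding, pointwise in the age variable $a \in [0, \bar s]$,
\begin{equation*}
\left[\int_t^s e^{(s-\tau)A} u_0(\tau)\, \delta_0 \, d\tau\right](a) \;=\; e^{-\mu a}\, u_0(s - a)\, \chi_{[0,\, s - t]}(a).
\end{equation*}
Thus the boundary action, despite being a priori only valued in $V'$, produces a genuine $L^2$-function, because the Dirac is instantaneously shifted off the boundary into the interior of $(0, \bar s)$.

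The remaining verification then splits into membership and continuity. Membership in $H$ is immediate from the bound $\| e^{-\mu \cdot}\, u_0(s - \cdot)\, \chi_{[0, s-t]} \|_{L^2}^2 \le \|u_0\|_{L^2(t, s)}^2$, which is finite because $u_0 \in L^p_\lambda(t, +\infty;\mathbb{R})$ with $p \ge 2$ embeds into $L^2_{\mathrm{loc}}$. Continuity in $s$ follows by combining the continuity of translations in $L^p$ with an absolute-continuity argument for the growing cut-off interval $[0, s - t]$. I expect this last step to be the main technical point: one must control the contribution near the moving endpoint $a = s - t$, which is in fact harmless here since the integrand is multiplied by a characteristic function whose endpoint varies continuously with $s$, and an easy approximation by $C^0_c$ functions reduces the argument to uniform continuity. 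Everything else reduces to standard semigroup estimates.
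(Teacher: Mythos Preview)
Your argument is correct. The decomposition into free evolution, distributed control, and boundary control is exactly the right one, and your explicit characteristic formula
\[
\left[\int_t^s e^{(s-\tau)A} u_0(\tau)\,\delta_0\,d\tau\right](a)=e^{-\mu a}u_0(s-a)\,\chi_{[0,s-t]}(a)
\]
is the key identity; the $L^2$-membership and $L^2$-continuity in $s$ then follow as you outline (local $L^p\hookrightarrow L^2$ for $p\ge2$, continuity of translations, and absolute continuity of $\sigma\mapsto\int_t^\sigma|u_0|^2$ for the moving endpoint).

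As for comparison with the paper: the paper does \emph{not} prove this statement itself; it explicitly refers the reader to Barucci and Gozzi \cite{BG1} for ``the (simple) proof''. What you have written is essentially a reconstruction of that argument: \cite{BG1} works out the explicit solution of the age-structured transport equation via characteristics and reads off the $H$-regularity from the resulting formula, which is precisely your boundary term. So your approach is not an alternative route but rather the intended one, filled in in detail where the present paper only gives a citation.
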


Following Remark \ref{noidentif}, we then set $$V=D(A_0^*)=\{f\in
H^1(0,\bar s):f(\bar s)=0\}$$ and $\V =D(\A^*)^\prime$. Regarding
the target functional, we set
$$J_\infty(t,x;u):=-I(t,x;u_0,u_1),$$
with:

 $g_0:\V\to
{{\mathbb R}}, ~g_0(x) =- R( \langle\alpha , x\rangle),$


$h_0: U\to {{\mathbb R}}, ~h_0(u)=c_0(u_0)+\int_0^{\bar{s}} c_1(s,
u_1(s)) ds.$

\begin{rem} Here the extension of the datum
 $g_0$ to $\V$
is straightforward, as long as we assume that $\alpha\in V$ and
replace scalar product in $H$ with the duality in $V,\V$.

Note further that $\omega=0$, $\lambda>0$ (the type of the
semigroup is negative and equal to $-\mu$).  \hfill\qed\end{rem}

As the problem now fits into our abstract setting, the main
results of the previous sections apply to the economic problem
when data $R$, $c_0$, $c_1$ satisfy Assumption \ref{asst2}[8.a] or
[8.b]. In particular, such thing happens in the following two
interesting cases:

\begin{itemize}
    \item     If we assume, for instance, that
$R$ is a concave, $C^1$, sublinear function (for example one could
take $R$ quadratic in a bounded set and then take its linear
continuation, see e.g. \cite{F1,F3}), and $c_0$, $c_1$ quadratic
functions of the control variable, then Assumption
\ref{asst2}[8.b] holds.
    \item Assumption \ref{asst2}[8.a] is instead satisfied when $R$ is, for
instance, quadratic - as it occurs in some other meaningful
economic problems - and $c_0$, $c_1$ are equal to $+\infty$
outside some compact interval, and equal to any convex {\it
l.s.c.} function otherwise. Such case corresponds to that of
constrained controls (controls that violate the constrain yield
infinite costs).
\end{itemize}


In these cases, Theorems  \ref{th:mainnew},
\ref{th:uniquefeedback} hold true. In particular Theorem
\ref{th:uniquefeedback} states the existence of a unique optimal
pair $(u^*,y^*)$ for any initial datum $x \in \V$. Note that in
general the optimal trajectory $y^*$ lives in $\V$. However, since
the economic problem makes sense in $H$, we would now like to
infer that  whenever   $x$ (the initial age distribution of
capital) lies in $H$, then   the whole optimal trajectory lives in
$H$. Indeed, this is guaranteed by Theorem \ref{th:statoinH}.

All these results allow to perform the analysis of the behavior of
the optimal pairs and to study phenomena such as the diffusion of
new technologies (see e.g. \cite{BG1,BG2}) and the anticipation
effects (see e.g. \cite{F1,F3}). With respect to the results in
\cite{BG1,BG2}, here also the case of nonlinear $R$ (which is
particularly interesting from the economic point of view, as it
takes into account the case of large investors) is considered.
With respect to the results in \cite{F1,F3}, here the existence of
optimal feedbacks yields a tool to study more deeply the long run
behavior of the trajectories, like the presence of long run
equilibrium points and their properties.

\bigskip

\section{Equilibrium points}
We call \emph{equilibrium point} any stationary solution of the
closed loop equation
$$y^\prime(\tau)=Ay(\tau)+B(h_0^*)^\prime(-B^*\Psi^\prime(y(\tau))), $$
that is any $x\in\V$ such that
\begin{equation}\label{equili1}Ax+B(h_0^*)^\prime(-B^*\Psi^\prime(x))=0.\end{equation}
\begin{lemma} Let  Assumptions \ref{asst2} [1-7] be satisfied, $p\ge2$, $\lambda\ge2\omega$.
Any equilibrium point $x\in D(A)$ satisfies
$$Ax+B(h_0^*)^\prime(-B^*(\lambda-A_0^*)^{-1}g_0^\prime(x))=0.$$\end{lemma}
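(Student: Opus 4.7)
The plan is to express $\Psi'(x)$ at an equilibrium in terms of the co-state, and then exploit the fact that at equilibrium the optimal trajectory is constant, so the co-state integral collapses to a resolvent of $A_0^*$.

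First I would observe that if $x\in D(A)$ is an equilibrium point, then the constant function $y(\tau)\equiv x$ is a solution of the closed loop equation (\ref{CLE}) with initial datum $x$ at time $t=0$; indeed, applying variation of constants to a constant trajectory gives exactly (\ref{equili1}). By the uniqueness statement in Theorem \ref{th:uniquefeedback}, this constant function must therefore coincide with the optimal trajectory $y^*(\cdot;0,x)$, and the associated optimal control is the constant $u^*(\tau)\equiv (h_0^*)'(-B^*\Psi'(x))$.

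Next I would invoke Theorem \ref{p^*}, which identifies $\Psi'(x)$ with the value at $0$ of the co-state $\pi^*(\cdot;0,x)$ associated with the optimal pair. Plugging the constant trajectory $y^*(\sigma)\equiv x$ into the mild solution formula (\ref{pmild}) gives
\begin{equation*}
\pi^*(0;0,x)=\int_0^{+\infty} e^{(A_0^*-\lambda)\sigma}\,g_0'(x)\,d\sigma .
\end{equation*}
Since $g_0'(x)$ does not depend on $\sigma$, and since the hypothesis $\lambda\ge 2\omega$ together with Assumptions \ref{asst2}[1-3] (passed to the adjoint semigroup) ensures the exponential decay $\|e^{(A_0^*-\lambda)\sigma}\|\le e^{(\omega-\lambda)\sigma}$, the integrand is Bochner integrable in $V$ and the integral equals, by the standard Laplace transform identity for strongly continuous semigroups, the resolvent $(\lambda-A_0^*)^{-1}g_0'(x)$.

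Combining the last two steps yields $\Psi'(x)=(\lambda-A_0^*)^{-1}g_0'(x)$. Substituting this into the defining equation (\ref{equili1}) gives the claim. The only delicate point is making sure $g_0'(x)\in V$ (so that the resolvent of $A_0^*$ is applied to an element in its natural domain) and that the convergence of the co-state integral is justified in the correct function space; both follow from Assumption \ref{asst2}[4] combined with Lemma \ref{p^*def}, which already asserts that $\pi$ is a well-defined, continuous $V$-valued function under precisely the hypotheses assumed here. No other step presents a real obstacle once the identification $y^*\equiv x$ is in place.
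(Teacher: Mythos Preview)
Your proof is correct and follows essentially the same route as the paper: identify the optimal trajectory at an equilibrium as the constant $y^*\equiv x$, compute the associated co-state via the mild formula (\ref{pmild}) to obtain $\Psi'(x)=(\lambda-A_0^*)^{-1}g_0'(x)$ through the Laplace transform identity, and substitute back into (\ref{equili1}). The paper's own proof is a terse version of exactly this argument---it writes down the stationary co-state integral and collapses it to the resolvent---while you have made explicit the appeal to Theorems \ref{th:uniquefeedback} and \ref{p^*} and the well-posedness check via Lemma \ref{p^*def}.
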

\begin{proof}
Let $\bar x$ be a solution to (\ref{equili1}). Then there exist a
stationary solution $\bar p$ to the co-state equation given by
 $$\bar p:=p^*(\tau;\bar x)=\int_\tau^\infty e^{(A_0^*-\lambda)(\sigma-\tau)}
 g_0^\prime(\bar x)d\sigma\equiv\int_0^\infty e^{(A_0^*-\lambda)r}
 g_0^\prime(\bar x)dr=(\lambda-A_0^*)^{-1}g_0^\prime(\bar x),$$
 where the last equality holds by definition of  $(\lambda-A_0^*)^{-1}$.
\end{proof}

\bigskip


  Whenever $A$ proves invertible, then, equilibrium points can be
regarded also as fixed points of the operator $T:\V\to \V$,
defined by
\begin{equation}\label{T}
Tx:=-A^{-1}B(h_0^*)^\prime(-B^*\Psi^\prime(x)),\end{equation}
where $\Psi$ is the unique strong solution of the HJB equation
(\ref{SHJB}), or searched among fixed point of
\begin{equation}\label{Tlambda}
Tx:=-A^{-1}B(h_0^*)^\prime(-B^*(\lambda-A_0^*)^{-1}g_0^\prime(
x)),\end{equation}

\begin{lemma}\label{fixedpoint} Let Assumptions \ref{asst2} [1-7] be satisfied and assume
that
$${\lambda-\omega}>\Vert (A_0^*)^{-1}\Vert_{L(H)}\Vert B\Vert^2_{L(\V,U)}
[(h_0^*)^\prime][g_0^\prime].$$ Then there exists a unique
solution $\bar x\in D(A)$ to (\ref{equil1}).
\end{lemma}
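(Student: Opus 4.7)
The plan is to apply the Banach contraction mapping theorem to the operator $T:\V\to\V$ defined in (\ref{Tlambda}). The preceding lemma already shows that any equilibrium $\bar x\in D(A)$ satisfies $A\bar x+B(h_0^*)'(-B^*(\lambda-A_0^*)^{-1}g_0'(\bar x))=0$. The standing hypothesis that $\|(A_0^*)^{-1}\|_{L(H)}$ is finite ensures (by duality between $A_0$ and $A_0^*$ and the rigged Hilbert space construction of Remark \ref{noidentif}) that $0$ belongs to the resolvent set of the extension $A$, with $\|A^{-1}\|_{L(\V)}=\|(A_0^*)^{-1}\|_{L(H)}$. Thus the equilibrium identity is equivalent to $\bar x=T\bar x$, and since $T(\V)\subset A^{-1}(\V)=D(A)$, any fixed point of $T$ lies automatically in $D(A)$.

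Next I would estimate $|Tx-Ty|_\V$ for $x,y\in\V$ by chaining four Lipschitz/norm bounds in the natural order in which the operators appear in $T$. First, $g_0\in\Sigma_0(\V)$ gives that $g_0':\V\to V$ is Lipschitz with constant $[g_0']$. Second, Assumption \ref{asst2}[3] (the semigroup has type $\omega$, which is transferred to the adjoint semigroup) together with $\lambda>\omega$ yields $\|(\lambda-A_0^*)^{-1}\|_{L(V)}\le(\lambda-\omega)^{-1}$. Third, $\|B^*\|_{L(V,U)}=\|B\|_{L(U,\V)}$. Fourth, $(h_0^*)'$ is Lipschitz on $U$ with constant $[(h_0^*)']$, and $\|A^{-1}B\|_{L(U,\V)}\le\|A^{-1}\|_{L(\V)}\|B\|_{L(U,\V)}$. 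Composing these four estimates gives
$$|Tx-Ty|_\V\le\frac{\|A^{-1}\|_{L(\V)}\,\|B\|^2_{L(U,\V)}\,[(h_0^*)']\,[g_0']}{\lambda-\omega}\,|x-y|_\V .$$
Using the identification $\|A^{-1}\|_{L(\V)}=\|(A_0^*)^{-1}\|_{L(H)}$, the standing hypothesis of the lemma says precisely that this Lipschitz constant is strictly less than $1$, so $T$ is a strict contraction of the Banach space $\V$ into itself. Banach's fixed point theorem then produces a unique $\bar x\in\V$ with $T\bar x=\bar x$, and this $\bar x$ belongs to $D(A)$ and solves the equilibrium equation; uniqueness in $D(A)$ is inherited from uniqueness of the fixed point.

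The main obstacle is bookkeeping: one must confirm at each step that the operator really maps between the spaces claimed, since the construction mixes four distinct Hilbert spaces ($V\subset H\subset\V$ and the control space $U$). In particular, one must carefully justify the passage from $\|(A_0^*)^{-1}\|_{L(H)}$ (which appears in the hypothesis) to $\|A^{-1}\|_{L(\V)}$ (which is what the contraction estimate naturally produces), via the duality/extension recalled in Remark \ref{noidentif}; once this identification is in place, the remaining computation is a routine chain of operator norm inequalities and the application of Banach's theorem.
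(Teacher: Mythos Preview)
Your argument is essentially correct and very close to the paper's, but there is one genuine difference of route worth noting and one small gap.

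\textbf{Different choice of $T$.} The paper applies the contraction mapping theorem to the map $T$ of (\ref{T}), i.e.\ $Tx=-A^{-1}B(h_0^*)'(-B^*\Psi'(x))$, and obtains the factor $(\lambda-\omega)^{-1}$ by quoting the bound $[\Psi']\le[g_0']/(\lambda-\omega)$ from \cite{FaGo2}. You instead work with the map (\ref{Tlambda}), replacing $\Psi'$ by $(\lambda-A_0^*)^{-1}g_0'$, and produce the same factor via the resolvent bound $\|(\lambda-A_0^*)^{-1}\|_{L(V)}\le(\lambda-\omega)^{-1}$. Both yield exactly the same Lipschitz constant; your version is more self-contained, while the paper's version has the advantage that a fixed point of (\ref{T}) is \emph{by definition} a solution of (\ref{equili1}).

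\textbf{The gap.} You write that ``the equilibrium identity is equivalent to $\bar x=T\bar x$'', invoking the preceding lemma. That lemma, however, only gives one implication: every equilibrium is a fixed point of (\ref{Tlambda}). It does not show that every fixed point of (\ref{Tlambda}) satisfies (\ref{equili1}), because in general $\Psi'(x)\ne(\lambda-A_0^*)^{-1}g_0'(x)$ for non-stationary $x$. To close the argument you must either (i) supply the reverse implication (e.g.\ show that the constant triple $(\bar x,\bar u,\bar\pi)$ with $\bar\pi=(\lambda-A_0^*)^{-1}g_0'(\bar x)$ is extremal, hence optimal, hence $\Psi'(\bar x)=\bar\pi$ by Theorem \ref{p^*}), or (ii) observe that the map (\ref{T}) is \emph{also} a contraction with the very same constant, so both maps have unique fixed points, and by the preceding lemma the fixed point of (\ref{T}) is a fixed point of (\ref{Tlambda}), forcing them to coincide. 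Either fix is short, but one of them is needed.

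\textbf{Minor point.} The Proposition in the paper gives only $\|A^{-1}\|_{L(\V)}\le\|(A_0^*)^{-1}\|_{L(H)}$, not equality; fortunately only the inequality is required for your contraction estimate.
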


To prove the assertion we need the following result.

\begin{prop} Assume that $0\in \rho(A_0^*)$
(that  is, $(A_0^*)^{-1}$ is well defined and bounded in $H$).
Then $A^{-1}$ has bounded inverse on $\V$,
 defined by the position $$
\langle A^{-1}f,\varphi\ranglev= \langle
f,(A_0^*)^{-1}\varphi\ranglev, \ for\ all\ f\in \V\ and\
\varphi\in V.$$ Moreover
$$\Vert A^{-1}\Vert_{L(\V)}\le \Vert (A_0^*)^{-1}\Vert_{L(H)}.$$
\end{prop}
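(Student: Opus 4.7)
My plan is to define an operator $T:\V\to\V$ by the formula $\langle Tf,\varphi\ranglev:=\langle f,(A_0^*)^{-1}\varphi\ranglev$, show that it is bounded on $\V$ with $\|T\|_{L(\V)}\le\|(A_0^*)^{-1}\|_{L(H)}$, and then identify $T$ with $A^{-1}$. To see that $T$ is well defined, I fix $f\in\V$: for any $\varphi\in V\subset H$, the element $(A_0^*)^{-1}\varphi$ is defined and lies in the range of $(A_0^*)^{-1}$ acting on $H$, which is exactly $D(A_0^*)=V$, so the duality pairing is well posed and linear in $\varphi$. For the quantitative bound I equip $V=D(A_0^*)$ with the equivalent Hilbert norm $\|v\|_V:=\|A_0^*v\|_H$ (which is equivalent to the graph norm precisely because $0\in\rho(A_0^*)$). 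Then for $\varphi\in V$,
$$\|(A_0^*)^{-1}\varphi\|_V=\|A_0^*(A_0^*)^{-1}\varphi\|_H=\|\varphi\|_H\le\|(A_0^*)^{-1}\|_{L(H)}\,\|A_0^*\varphi\|_H=\|(A_0^*)^{-1}\|_{L(H)}\,\|\varphi\|_V,$$
so $(A_0^*)^{-1}$ is bounded on $V$ with $\|(A_0^*)^{-1}\|_{L(V)}\le\|(A_0^*)^{-1}\|_{L(H)}$. Consequently $|\langle f,(A_0^*)^{-1}\varphi\ranglev|\le\|(A_0^*)^{-1}\|_{L(H)}\|f\|_\V\|\varphi\|_V$, whence $Tf\in\V$ and $\|Tf\|_\V\le\|(A_0^*)^{-1}\|_{L(H)}\|f\|_\V$.

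\textbf{Identification with $A^{-1}$.} To check $T\circ A=\mathrm{id}$ on the dense subspace $D(A_0)\subset H\subset\V$, I take $y\in D(A_0)$: since $A_0y\in H$ and $(A_0^*)^{-1}\varphi\in V\subset H$, the duality pairing collapses to the $H$-inner product, giving
$$\langle T(A_0y),\varphi\ranglev=(A_0y,(A_0^*)^{-1}\varphi)_H=(y,A_0^*(A_0^*)^{-1}\varphi)_H=(y,\varphi)_H=\langle y,\varphi\ranglev,$$
so $T(A_0y)=y$. For the converse I set $z=Tf$ and, for $\psi\in D((A_0^*)^2)$, substitute $\varphi=A_0^*\psi$ in the defining formula, obtaining
$$\langle z,A_0^*\psi\ranglev=\langle f,(A_0^*)^{-1}A_0^*\psi\ranglev=\langle f,\psi\ranglev.$$
This is precisely the weak form of $Az=f$ relative to the transposition identity $\langle Ay,\psi\ranglev=\langle y,A_0^*\psi\ranglev$, which characterizes the extended generator $A$ on $\V$ in the Gelfand-triple framework of Remark \ref{noidentif} and \cite{Fa3}. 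Combined with density of $D((A_0^*)^2)$ in $V$, the closedness of $A$, and the boundedness of $T$, this yields $Tf\in D(A)$ and $A(Tf)=f$ for every $f\in\V$; hence $T=A^{-1}$.

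\textbf{Main obstacle.} The routine piece is the boundedness estimate, which is a short duality calculation made sharp by the choice of the norm $\|A_0^*\cdot\|_H$ on $V$; with the graph norm one would only obtain the weaker bound $\|A^{-1}\|_{L(\V)}\le\sqrt{\|(A_0^*)^{-1}\|_{L(H)}^2+1}$. The genuinely delicate step is the identification $T=A^{-1}$: it rests on the transposition identity that characterizes $A$ as the extension of $A_0$ to $\V$, a property built into the construction of $\V$ and $A$ from the Gelfand triple $V\subset H\subset\V$. Once that characterization is accepted, the verification of $A\circ T=\mathrm{id}_{\V}$ reduces to the direct substitution displayed above.
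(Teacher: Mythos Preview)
Your argument is correct and follows the same duality scheme as the paper: bound $|(A_0^*)^{-1}\varphi|_V$ by $\|(A_0^*)^{-1}\|_{L(H)}|\varphi|_V$ and conclude by duality. The paper's proof does only this estimate; it treats the formula $\langle A^{-1}f,\varphi\rangle=\langle f,(A_0^*)^{-1}\varphi\rangle$ as part of the statement rather than something to verify, so your identification step $T=A^{-1}$ is extra detail that the paper simply omits.

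One point deserves correction. Your claim that the graph norm would only yield the weaker bound $\sqrt{\|(A_0^*)^{-1}\|_{L(H)}^2+1}$ is not right, and your renorming of $V$ is therefore unnecessary. With the norm fixed in Remark~\ref{noidentif}, namely $|v|_V^2=|v|_H^2+|A_0^*v|_H^2$, one computes
\[
|(A_0^*)^{-1}\varphi|_V^2=|(A_0^*)^{-1}\varphi|_H^2+|\varphi|_H^2
\le \|(A_0^*)^{-1}\|_{L(H)}^2\bigl(|\varphi|_H^2+|A_0^*\varphi|_H^2\bigr)
=\|(A_0^*)^{-1}\|_{L(H)}^2\,|\varphi|_V^2,
\]
where the second term is handled via $|\varphi|_H=|(A_0^*)^{-1}A_0^*\varphi|_H\le\|(A_0^*)^{-1}\|_{L(H)}|A_0^*\varphi|_H$. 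This is precisely the paper's calculation. Since the operator norm $\|A^{-1}\|_{L(\V)}$ in the statement refers to the dual of the paper's fixed norm on $V$, switching to the norm $\|A_0^*\cdot\|_H$ would, strictly speaking, prove a different inequality; fortunately the detour is avoidable.
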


\begin{proof}  For all
$f\in \V$ and $\varphi\in V$ we have
\begin{equation}\begin{split}
\vert \langle A^{-1}f,\varphi\ranglev\vert&=\vert \langle
f,(A_0^*)^{-1}\varphi\ranglev\vert\\
&\le\vert f\vertv\vert(A_0^*)^{-1}\varphi\vert_V\\
&=\vert f\vertv(\vert(A_0^*)^{-1}\varphi\vert_H+\vert A_0^*(A_0^*)^{-1}\varphi\vert_H)\\
&=\vert f\vertv(\vert(A_0^*)^{-1}\varphi\vert_H+\vert(A_0^*)^{-1}A_0^*\varphi\vert_H)\\
&\le\vert f\vertv\Vert(A_0^*)^{-1}\Vert_{L(H)}\vert\varphi\vert_V
\end{split}\end{equation}\end{proof}

  \begin{proof} (Lemma \ref{fixedpoint}) Define $T: \V\to \V$ as
as in (\ref{T}).  Then $T$ is Lipschitz continuous, with Lipschitz
constant smaller than 1. Indeed, according to the content of the
proof of Lemma 4.8 in \cite{FaGo2} (with null final cost)
$$[\Psi^\prime]\le\frac{[g_0^\prime]}{\lambda-\omega}$$ so that
\begin{equation}\begin{split}
\vert Tx-Ty\vertv&\le \Vert (A_0^*)^{-1}\Vert_{L(H)}\Vert
B\Vert^2_{L(\V,U)}
[(h_0^*)^\prime]\frac{[g_0^\prime]}{\lambda-\omega}\vert
x-y\vertv.\end{split}\end{equation}
\end{proof}

 In the particular case of the economic problem described in
 Section 3 and with the data there defined, the preceding result
 reads as follows.

\begin{cor} If 
$$\lambda+\mu>\frac{1}{\mu}\ [(h_0^*)^\prime]\
[R^\prime]\ \vert \alpha\vert^2_V,\ \  or\ \
\lambda+\mu>\frac{\bar s}{\sqrt 2}\ [(h_0^*)^\prime]\ [R^\prime]\
\vert \alpha\vert^2_V,$$ then there exists a unique equilibrium
point for optimal investment with vintage capital described in
Section 3.
\end{cor}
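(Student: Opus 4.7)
The plan is a direct specialization of Lemma \ref{fixedpoint} to the vintage capital setting of Section \ref{esemp}: I will recognize the data, translate each factor on the right-hand side of the lemma's contraction condition in terms of $\mu$, $\bar s$, $R$, and $\alpha$, and then combine the estimates.

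First, since the semigroup $\{e^{tA}\}$ has type $-\mu$ (as observed in the remark of Section \ref{esemp}), I would take $\omega = -\mu$, so that $\lambda - \omega = \lambda + \mu$: this produces the left-hand side of both alternatives in the corollary. Next, I would compute $[g_0']$. With $g_0(x) = -R(\langle\alpha, x\rangle_{V,V'})$ and $\alpha \in V$, the chain rule yields $g_0'(x) = -R'(\langle \alpha, x\rangle)\alpha \in V$, whence $[g_0'] \le [R']\,|\alpha|_V^2$ via the duality pairing. The factor $[(h_0^*)']$ remains abstract.

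The two alternative conditions stem from two estimates of the remaining operator-theoretic factor $\|(A_0^*)^{-1}\|_{L(H)}\cdot\|B\|^2_{L(U,V')}$. Using $A_0^* g = g' - \mu g$ with $g(\bar s) = 0$, I would solve explicitly $g(s) = -\int_s^{\bar s} e^{\mu(s-r)}h(r)\,dr$, obtaining two natural bounds for $(A_0^*)^{-1}$ on $H$: the first uses the exponential weight together with Cauchy--Schwarz and Fubini's theorem, yielding the constant $1/\mu$; the second drops the exponential and bounds $|g(s)| \le \sqrt{\bar s - s}\,|h|_H$ pointwise, which integrates to $\bar s/\sqrt 2$.

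The main obstacle is controlling the factor $\|B\|^2_{L(U,V')} = \|B^*\|^2_{L(V,U)}$ in conjunction with the above bounds. Since $B^*\varphi = (\varphi(0), \varphi)$, this norm equals $\sup_{|\varphi|_V \le 1} \sqrt{\varphi(0)^2 + |\varphi|_H^2}$, so the boundary trace $\varphi(0)$ must be estimated using $\varphi(\bar s) = 0$ together with the identity $\varphi' = A_0^*\varphi + \mu\varphi$ built into the definition of $|\cdot|_V$. Combining these trace estimates with the bounds on $(A_0^*)^{-1}$ above, and substituting into the contraction inequality of Lemma \ref{fixedpoint}, I expect to recover precisely the two sufficient conditions stated in the corollary.
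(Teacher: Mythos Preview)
Your approach is essentially the paper's: specialize Lemma \ref{fixedpoint} with $\omega=-\mu$, compute $[g_0']\le[R']\,|\alpha|_V^2$, and supply two alternative bounds for $\Vert(A_0^*)^{-1}\Vert_{L(H)}$. Two points of execution differ. First, for the bound $1/\mu$ the paper does not integrate at all but simply invokes the Hille--Yosida theorem (the type of the semigroup is $-\mu$). Second, and more notably, the paper does not regard $\Vert B\Vert_{L(U,\V)}$ as an obstacle: it records $\Vert B\Vert\le 1$ at the outset and moves on, so the two resolvent bounds plug straight into the contraction condition without any coupling to trace estimates. Your plan to control $\varphi(0)$ through $\varphi'=A_0^*\varphi+\mu\varphi$ and then combine this with the $(A_0^*)^{-1}$ bounds is therefore more work than the paper actually performs; whether $\Vert B\Vert\le 1$ is as immediate as the paper suggests is a separate question, but that is what the paper's proof relies on.
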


 \begin{proof} In the case of the economic problem we have
 $\Vert B\Vert\le1$, $\omega=-\mu$. By means of Hille-Yosida
 Theorem
$$\Vert (A_0^*)^{-1}\Vert_{L(H)}\le \frac{1}{\mu}.$$
If we show that moreover
$$\Vert (A_0^*)^{-1}\Vert_{L(H)}\le \frac{\bar s}{\sqrt{2}},$$
the rest of the proof is straightforward. Recall that
$D(A_0^*)=\{f\in H^1(0,\bar s)\ :\ f(\bar s)=0\}$, moreover
$$(A_0^*)^{-1}f(s)=-\int_s^{\bar s}e^{-\mu(\sigma-s)}f(\sigma)d\sigma.$$
By means of H\"older inequality one derives
\begin{equation}\begin{split}\vert
(A_0^*)^{-1}f\vert_H^2&=\int_0^{\bar s}\left\vert
\int_s^{\bar s}e^{-\mu(\sigma-s)}f(\sigma)d\sigma\right\vert^2ds\\
&\le\frac{1}{2\mu}\int_0^{\bar s}(1-e^{-2\mu(\bar s-s)})\vert
f\vert^2_{L^2(s,\bar s)}ds\\
&\le\vert f\vert_H^2\frac{1}{2\mu}\left(\bar
s-\frac{1-e^{-2\mu\bar
s}}{2\mu}\right)\\
&\le \vert f\vert_H^2\frac{\bar s^2}{2},\end{split}\end{equation}
Instead if  $\mu=0$ we have $$\vert
(A_0^*)^{-1}f\vert_H^2\le\int_0^{\bar s}\left(\frac{\bar
s}{2}-\frac{(\bar s-\sigma)^2}{2}\right)\vert f(\sigma)\vert^2
d\sigma\le \frac{\bar s^2}{2}\vert f\vert_H^2.$$
\end{proof}
\begin{rem} If
$$c(u)=\int_0^{\bar
s}[\beta_1(s)u_1^2(s)+q_1(s)u_1(s)]ds+[\beta_0u_0^2+q_0u_0] ,\ \
R(Q)=bQ-Q^2,$$  the preceding are implied respectively by
$${\lambda}+\mu>\frac{1}{\mu}\;a\;\left(1+\frac{1}{\beta_0}\right)\left(\int_0^{\bar
s}\alpha(s)^2ds+\int_0^{\bar
s}\alpha^\prime(s)^2ds-\mu\alpha(0)^2\right)$$ and
$$\lambda+\mu>\frac{\bar s}{\sqrt 2}\;a\;
\left(1+\frac{1}{\beta_0}\right)\left(\int_0^{\bar
s}\alpha(s)^2ds+\int_0^{\bar
s}\alpha^\prime(s)^2ds-\mu\alpha(0)^2\right).$$ The statement
derives from
$$[g_0']=2a\vert\alpha\vert^2_V,\
[(h_0^*)^\prime]=\Vert
M_{\frac{1}{2\beta}}\Vert_{L(U)}\le\frac{1}{2}\left(1+\frac{1}{\beta_0}\right),$$
where $M_{\frac{1}{2\beta}}$ is the operator described in
(\ref{M}) in the next section.\end{rem}
\bigskip

\section{Explicit formulae for equilibrium points for vintage capital with convex-linear cost}
Throughout the subsection we assume that $h_0$ is given by
 \begin{equation}\label{ipo2}\begin{split}h_0(u)&=(M_\beta u\vert u)_U+(q\vert u)_U\\
&=\int_0^{\bar
s}u_1(s)[\beta_1(s)u_1(s)+q_1(s)]ds+u_0[\beta_0u_0+q_0]\end{split}\end{equation}
where $\beta=(\beta_0,\beta_1)\in R\times L^\infty(0,\bar s)$,
with $\beta_1(s),\beta_0\ge\epsilon\ge0$, $q=(q_0,q_1)\in R\times
L^2(0,\bar s)\equiv U$, and $M_\beta:U\to U$ is given by
\begin{equation}\label{M}
M_\beta u(s):=(\beta_0 u_0,\beta_1(s)u(s)).\end{equation} Then is
is easy to show that
\begin{equation}\label{ipo3}\begin{split}
h_0^*(u)&=(M_{\frac{1}{4\beta}}(u-q)\vert
u-q)_U,\\
&=\int_0^{\bar
s}\frac{1}{4\beta_1(s)}[u_1(s)-q_1(s)]^2ds+\frac{1}{4\beta_0}[u_0-q_0]^2\\
\end{split}\end{equation}so that
$$(h_0^*)^\prime(u)=M_{\frac{1}{2\beta}}(u-q),$$
more explicitly
$$(h_0^*)^\prime(u)(s)=\left(\frac{1}{2\beta_0}[u_0-q_0];\frac{1}{2\beta_1(s)}[u_1(s)
-q_1(\cdot)]\right).$$

\begin{lemma}\label{equil1} Let $(\ref{ipo2})$ be satisfied,
and $R\in C^1(\mathbb{R})$, with $R'$ Lipschitz-continuous.
 Moreover, we set
$$w_1=-A^{-1}BM_{\frac{1}{2\beta}}B^*(\lambda-A_0^*)^{-1}\alpha,
\ \ and\ \ w_2=A^{-1}BM_{\frac{1}{2\beta}}q.$$ Then there exists
an equilibrium point $ x\in H$ if an only if there exists a real
number $ \eta$ satisfying
$$\eta=R^\prime(\langle \alpha, w_2+\eta w_1\rangle).$$
In that case $$x=w_2+\eta w_1.$$ Moreover, if
$R^{\prime\prime}\le0$, then such equilibrium point does exist and
is unique.
\end{lemma}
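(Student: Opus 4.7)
The plan is a direct substitution-and-reduction: plug the concrete quadratic-linear form of $h_0$ from (\ref{ipo2}) and the rank-one structure of $g_0$ into the abstract equilibrium characterisation of Lemma \ref{equil1}, and exploit the fact that $g_0'$ points along $\alpha$ to collapse the infinite-dimensional equation to a single scalar fixed-point equation for $\eta:=R'(\langle\alpha,x\rangle)$. Existence and uniqueness under $R''\le 0$ then follow from a one-dimensional monotonicity argument.

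Carrying out the substitution, I compute $g_0'(x)=-R'(\langle\alpha,x\rangle)\alpha$ from $g_0(x)=-R(\langle\alpha,x\rangle)$, and $(h_0^*)'(u)=M_{\frac{1}{2\beta}}(u-q)$ from (\ref{ipo3}). Feeding these into
\[
Ax+B(h_0^*)'\bigl(-B^*(\lambda-A_0^*)^{-1}g_0'(x)\bigr)=0
\]
gives
\[
Ax+BM_{\frac{1}{2\beta}}\bigl(R'(\langle\alpha,x\rangle)B^*(\lambda-A_0^*)^{-1}\alpha-q\bigr)=0.
\]
The Proposition preceding grants $A^{-1}\in L(\V)$ (since $0\in\rho(A_0^*)$ for the vintage-capital operator, as witnessed by the explicit formula in the proof of the previous Corollary). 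Applying $A^{-1}$ and setting $\eta:=R'(\langle\alpha,x\rangle)$, the equation collapses to $x=\eta w_1+w_2$, which when fed back into the definition of $\eta$ gives the scalar self-consistency $\eta=R'(\langle\alpha,w_2+\eta w_1\rangle)$. The converse direction is obtained by reversing the chain: any such $\eta$ yields $x:=w_2+\eta w_1\in D(A)$ satisfying (\ref{equili1}), and the extra regularity $x\in H$ (not merely in $\V$) follows from the smoothing action of $A^{-1}$ on the concrete sources $BM_{\frac{1}{2\beta}}B^*(\lambda-A_0^*)^{-1}\alpha$ and $BM_{\frac{1}{2\beta}}q$ produced by the vintage-capital operators (integration against an exponential kernel of a delta plus an $L^2$ function).

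For the final assertion, I set $c:=\langle\alpha,w_1\rangle$, $d:=\langle\alpha,w_2\rangle$, and study $F(\eta):=R'(d+c\eta)-\eta$. The sign $c\ge 0$ follows from the positivity of the vintage-capital structure: $(\lambda-A_0^*)^{-1}\alpha\ge 0$ (its explicit integral kernel is positive and $\alpha\ge 0$), $M_{\frac{1}{2\beta}}$ and $B,B^*$ preserve positivity, and $-A^{-1}$ acts positively on such sources since $A^{-1}$ is given by the negative integral operator dual to $(A_0^*)^{-1}$. Granted $c\ge 0$, the hypothesis $R''\le 0$ makes $\eta\mapsto R'(d+c\eta)$ non-increasing, so $F$ is strictly decreasing; Lipschitz continuity of $R'$ forces $F(\eta)\to\mp\infty$ as $\eta\to\pm\infty$, and the intermediate value theorem produces a unique zero. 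The main obstacle I anticipate is pinning down the positivity $c\ge 0$ rigorously: everything else is routine algebra, but this step is the one that genuinely uses the concrete vintage-capital structure rather than the abstract Assumptions \ref{asst2}.
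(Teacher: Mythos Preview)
Your proof is correct and follows essentially the same route as the paper's: both substitute the explicit forms of $g_0'$ and $(h_0^*)'$ into the equilibrium characterisation, invert $A$ (equivalently, recast as the fixed-point $Tx=x$ for the operator $T$ in (\ref{Tlambda})), and use the rank-one structure along $\alpha$ to reduce to the scalar equation for $\eta$. Your treatment of the final assertion is in fact more detailed than the paper's, which simply asserts $\langle\alpha,w_1\rangle\ge0$ and calls the remainder straightforward.
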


\begin{rem}Note that $w_1$ and $w_2$ may be explicitly computed.
According to the notation in \cite{BG1}, we set
$$\bar\alpha(s)=(\lambda-A_0^*)^{-1}\alpha(s)=\int_s^{\bar s}e^{-(\mu+\lambda)(\sigma-s)}
\alpha(\sigma) d\sigma$$ the discounted return associated with a
unit of capital of vintage s, and see that
\begin{equation}\begin{split}w_1(s)&=-[A^{-1}BMB^*(\lambda-A_0^*)^{-1}\alpha](s)\\
&=-[A^{-1}B M(\bar\alpha(0),\bar\alpha)](s)\\
 &=-\left[A^{-1}B\left(\frac{\bar\alpha(0)}{2\beta_0},\frac{\bar\alpha(\cdot)}{2\beta_1(\cdot)}\right)\right](s)\\
 &=-\frac{\bar\alpha(0)}{2\beta_0}[A^{-1}\delta_0](s)-
 [A^{-1}\frac{\bar\alpha(\cdot)}{2\beta_1(\cdot)}](s)\\
&=\frac{\bar\alpha(0)}{2\beta_0}e^{-\mu
s}+\int_0^se^{-\mu(s-\sigma)}\frac{\bar\alpha(\sigma)}
{2\beta_1(\sigma)}d\sigma.\end{split}\end{equation} Similarly, one
shows that $$w_2(s)=\frac{q_0}{2\beta_0}e^{-\mu
s}+\int_0^se^{-\mu(s-\sigma)}\frac{q_1(\sigma)}
{2\beta_1(\sigma)}d\sigma.$$
\end{rem}
\begin{proof} In the case of the economic problem, $D(A)=H$, so that the operator
$T:H\to H$ defined in (\ref{Tlambda}) is given by
\begin{equation}\begin{split}
Tx&=-A^{-1}BM(-B^*\bar p-q)\\
&= -R^\prime(\langle \alpha, x\rangle) A^{-1}BMB^*(\lambda-A_0^*)^{-1}\alpha+A^{-1}BMq\\
&=  R^\prime(\langle \alpha,
x\rangle)w_1+w_2.\end{split}\end{equation} Hence
$$Tx=x\iff
x-w_2=R^\prime(\langle \alpha, x\rangle)w_1$$ which is true if and
only if $x-w_2=\eta w_1$ for some  $\eta\in \mathbb{R}$, that is
if and only if
\begin{equation}\label{puntieq}
\eta=R^\prime(\langle \alpha, w_2+\eta w_1\rangle).\end{equation}
The last assertion is straightforward, as $\langle
w_1,\alpha\rangle\ge0$.
\end{proof}

From the preceding Lemma one derives the following results.

\begin{lemma} \label{theo:eq1} Let assumptions
$(\ref{ipo2})$ be satisfied,  and set $c_1:=\langle
\alpha,w_1\rangle,\ c_2:=\langle \alpha,w_2\rangle.$ Then there
exists a unique equilibrium point $\bar x$ in each of the
following cases:
\begin{enumerate} \item[(i)] If $R(Q)=-aQ^2+bQ$,  then
$$\bar x=w_2-\frac{2ac_2-b}{1+2ac_1} w_1;$$ \item[(ii)] If
$R(Q)=\ln(1+Q),$ for $Q\ge0$ and $R(Q)=Q$ for $Q<0$, then
$$\bar x=w_2+\frac{\sqrt{(1+c_2)^2+4c_1}-(1+c_2)}{2c_1}\; w_1$$ \item[(iii)]If
$R(Q)=(1+Q)^\gamma-1,$ with $\gamma\in(0,1)$, for $Q\ge0$ and
$R(Q)=\gamma Q$ for $Q<0$, then $\bar x=w_1+\bar \eta w_2$ where
$\bar \eta$ is the unique positive solution of
$$\eta=\frac{\gamma}{(1+c_1\eta+c_2)^{1-\gamma}}.$$\end{enumerate}
\end{lemma}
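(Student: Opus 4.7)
The strategy is to invoke Lemma \ref{equil1} to reduce the vector problem to the scalar fixed-point equation
\begin{equation*}
\eta = R'(c_2 + c_1\eta),
\end{equation*}
after which each of the three cases becomes a short algebraic exercise in the single real unknown $\eta$, and the equilibrium is recovered as $\bar x = w_2 + \eta w_1$. Uniqueness will follow in each case from the last sentence of Lemma \ref{equil1}, since in all of (i)--(iii) the function $R$ is globally concave ($R'' \le 0$). Throughout I use that $c_1 = \langle \alpha, w_1\rangle \ge 0$ (an observation already made at the end of the proof of Lemma \ref{equil1}) and the standing economic assumption $c_2 \ge 0$.

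For case (i), $R'(Q) = b - 2aQ$ is affine, so the fixed-point equation is linear in $\eta$; rearranging gives $\eta = (b - 2ac_2)/(1 + 2ac_1)$, with strictly positive denominator since $a>0$ and $c_1 \ge 0$, and this yields at once the stated formula for $\bar x$. For case (ii), $R'(Q) = (1+Q)^{-1}$ on the upper branch and $R'(Q) = 1$ on the lower, so $R'>0$ and any solution $\eta$ must be strictly positive; combined with $c_1, c_2 \ge 0$, this guarantees $c_2 + c_1\eta \ge 0$, and the equation becomes the quadratic $c_1\eta^2 + (1 + c_2)\eta - 1 = 0$ whose unique positive root is precisely the quantity appearing in (ii). Case (iii) is analogous: writing $R'(Q) = \gamma(1+Q)^{\gamma-1}$ for $Q \ge 0$, the same positivity argument forces the upper branch, and the equation reduces to $\eta = \gamma/(1 + c_2 + c_1\eta)^{1-\gamma}$; existence and uniqueness of a positive root come from monotonicity, since
\begin{equation*}
F(\eta) := \gamma(1 + c_2 + c_1\eta)^{\gamma - 1} - \eta
\end{equation*}
is continuous and strictly decreasing on $[0,+\infty)$, with $F(0) = \gamma(1+c_2)^{\gamma-1} > 0$ and $F(\eta) \to -\infty$ as $\eta \to +\infty$.

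The only point requiring any care is the branch identification in (ii) and (iii): one must rule out that a solution lie on the lower (linear) branch $c_2 + c_1\eta < 0$. This is straightforward, since on that branch the equation forces $\eta = 1$ in case (ii) and $\eta = \gamma$ in case (iii); plugging back gives $c_2 + c_1\eta \ge 0$ (strictly, unless $c_1 = c_2 = 0$, a degenerate situation handled directly by inspection), contradicting the branch hypothesis. Once the correct branch is fixed, each explicit formula is obtained by substituting the scalar $\eta$ into $\bar x = w_2 + \eta w_1$, and uniqueness is inherited from Lemma \ref{equil1}.
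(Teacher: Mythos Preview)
Your proof is correct and follows exactly the route the paper indicates: the paper gives no explicit proof of this lemma, merely stating that ``From the preceding Lemma one derives the following results,'' and your argument does precisely that---reduce via Lemma~\ref{equil1} to the scalar fixed-point equation $\eta = R'(c_2 + c_1\eta)$ and solve case by case, invoking concavity of $R$ for uniqueness.

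One small caveat: you introduce the hypothesis $c_2 \ge 0$ as a ``standing economic assumption,'' but the paper does not state this anywhere (and since $w_2$ depends on the linear cost coefficients $q = (q_0,q_1)$, which carry no sign restriction, $c_2$ need not be nonnegative in general). Your branch-elimination paragraph leans on it: the claim that ``plugging back gives $c_2 + c_1\eta \ge 0$'' fails if $c_2 + c_1 < 0$, in which case $\eta = 1$ (respectively $\eta = \gamma$) genuinely lies on the lower branch. The paper's formulas in (ii) and (iii) tacitly presuppose the upper branch, so either the paper is implicitly assuming something like $c_2 \ge 0$ (or at least $c_1 + c_2 \ge 0$), or the statement should be read with that restriction. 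This is a gap in the \emph{paper's} statement rather than in your method; you have simply made the hidden hypothesis explicit.
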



\begin{rem} Results similar to those contained in
Lemma \ref{equil1}, and \ref{theo:eq1} may be proved also in the
case $h_0(u)=\vert u\vert_U^p$, $p\ge2$.\end{rem}

\end{document}